\newtheorem{theorem}{Theorem}[section]
\newtheorem{lemma}[theorem]{Lemma}
\theoremstyle{definition}
\theoremstyle{remark}
\newtheorem{remark}[theorem]{Remark}
\newcommand\bR{\mathbb{R}}
\newcommand\cR{\mathcal{R}}
\newcommand{\mysection}[1]{\section{#1}
 \setcounter{equation}{0}}
\newcommand{\nlimsup}{\operatornamewithlimits{\overline{lim}}}
\newcommand{\nliminf}{\operatornamewithlimits{\underline{lim}}}
\newcommand\loc{\rm loc}
\begin{document}

   \title[Regular boundary points]{On nonequivalence of regular boundary points
 for  second-order elliptic operators} 
  \author{N.V. Krylov}
\thanks{The first author was partially supported by
 NSF Grant DMS-1160569  
and by a grant 
from the Simons Foundation (\#330456 to Nicolai Krylov)  }
\email{nkrylov@umn.edu}
\address{127 Vincent Hall, University of Minnesota,
 Minneapolis, MN, 55455}

\author{Timur Yastrzhembskiy}
\email{yastr002@umn.edu}
\address{127 Vincent Hall, University of Minnesota,
 Minneapolis, MN, 55455}

\keywords{Dirichlet problem, regular points, Wiener's test}

\subjclass{35J25, 35J67}

\begin{abstract}
 In this paper we present  examples of 
 nondivergence form second order 
elliptic operators  
 with continuous coefficients such that $L$ has an irregular
 boundary point that is regular for the
 Laplacian. 
 Also for any eigenvalue spread   $<1$ 
 of the matrix of the coefficients
 we provide an example of operator  
 with discontinuous coefficients that has 
    regular boundary points nonequivalent to Laplacian's
   (we give  examples for each direction of nonequivalence). 
    All examples are constructed for each dimension starting with $3$.
\end{abstract}
\maketitle

 \mysection{Introduction}

  Let $d \geq 2$ be an integer and let
 $\bR^d$ be a Euclidean space of points $ x = (x_1, \ldots, x_d)$. 
 We consider operators $L$ of the form 
   \begin{equation}
                                   \label{1.1}
  L = a^{ij} (x) \frac{ \partial^2}{\partial x_i \partial x_j}  
  \end{equation}
  where $a(x) = (a^{ij} (x))$ is a 
   symmetric matrix-valued Borel measurable function on $\bR^d$ such that
    $$
          \theta |\xi|^2 \leq a^{ij}(x) \xi_i \xi_j \leq \theta^{-1} |\xi|^2
    $$
  for some constant $\theta > 0$ and all $x,\xi\in\bR^{d}$.
Let $\kappa_1(x)$ and $\kappa_2(x)$ be the smallest
and the biggest eigenvalues of $a(x)$. We introduce 
a quantity $\kappa$ describing the eigenvalue spread 
 of  $a$
 as follows:
     $$
        \kappa(x) = \frac{\kappa_1 (x)}{\kappa_2(x)}. 
     $$
   Given a domain $ G\subset\bR^{d}$,
under appropriate assumptions on $a$ one can define 
  the notion of   regular points   relative to $L$ and $G$  
(see Section \ref{section 4.14.1} for the definition). 
  We write $L \leftrightarrow \Delta$ if the regular boundary
points of $L$ and $\Delta$ coincide (for any $ G$).
  We write $L \not \to \Delta$ if $L$ has a regular
  boundary point that is irregular for the Laplace operator 
 and  $\Delta \not \to L$ if it is the other way around.
 
 The characterization of regular boundary
points is an important problem in the theory of elliptic equations
 and it has attracted a number of researchers.
  In case $L = \Delta$ this problem was solved by N. Wiener 
and the answer is given in the form of the so-called 
Wiener's test 
 which involves the sum of certain capacities \cite{W}. 
 Observe that sometimes   
 de  la Vall\'ee Poussin's
criterion (see Exercise 7.6.7 and Remark 7.6.8 in \cite{Kr_96}), valid for operators with continuous coefficients
is more useful (cf.~\cite{Kr_66}).
 Wiener's result
  has been generalized in  \cite{B}  for operators \eqref{1.1} 
 with continuous
 coefficients, where Wiener's type criterion
is given for the regularity of points.
  No new results are obtained  as to
 when the regular points for the operator
and the Laplacian coincide.  

 The application of
Wiener's test in general appears to be a nontrivial problem 
(see Section 7.11  in \cite{IM} for some examples) and 
 it is worth mentioning that 
sometimes one can prove that a certain point is regular or
  irregular by more elementary means. 
To prove that a boundary point is 
 regular relative to $\Delta$
  one can use   barriers  (see, for example, 
Section 7.6 in \cite{Kr_96}
 for  H\"older
  continuous $a$). 
 In another direction to prove that a point   
is irregular relative to $\Delta$ 
  one can construct a Dirichlet problem and a (perhaps, generalized) solution 
 that fails to converge to its boundary data at 
 that point. 
  Such an example was constructed  by H. Lebesgue \cite{L} 
 in 1913 before
  the discovery of the famous  Wiener's  test
 by N. Wiener \cite{W} in 1924.  A different
example, having some similarities with the example in \cite{L},
was given  in 1925  by P.S. Urysohn \cite{U} who apparently
was unaware of \cite{L}. 
  These techniques also work for operators \eqref{1.1} if the coefficients
 are regular enough.
  
    In the general case of operators \eqref{1.1}  several authors
 proved that, under certain conditions
 on the coefficients $a$, the regular boundary points for $L$ and 
the Laplacian 
coincide.
   O.A. Oleinik \cite{O}
   showed this under the assumption that $a$  is $C^{3 + \alpha}$. 
  R.M. Herv\'e \cite{H} proved the claim for operators $L$
 with   H\"older   continuous 
 (which she calls Lipschitz continuous)  coefficients. 
   Significant improvement was done in \cite{K2}, where
    it was shown that it suffices to require $a$ to be uniformly Dini continuous. 
  Later a somewhat weaker 
 condition was discovered  by Yu.A. Alkhutov (see
\cite{A}). 
  In  the same paper  he
also gave an example of operator \eqref{1.1}
   with discontinuous coefficients such that $L
\leftrightarrow \Delta$. 

 However, it seems that the uniform Dini condition 
cannot be significantly relaxed. 
  In particular, in \cite{M1} K. Miller constructed examples of $\Delta \not \to L$ 
  for any $\kappa \in [0.5, 1)$ and $L \not \to \Delta$
  for any $\kappa \in (0, 1)$  if $d = 3$. In all examples the coefficients
 are continuous everywhere except the origin.
  E.M. Landis \cite{L1}  gave a more elementary example 
  of  $\Delta \not \to L$ 
   for $d=3$ and any $\kappa < 1$  and the coefficients $a$ discontinuous at the origin.
 However, in other dimensions there was only 
  partial progress. 
  In the same article \cite{M1} K. Miller provided examples 
  of  $L \not
\to \Delta$  
 for any $\kappa$  close to 1 if 
$d=2$
 and examples of $L \not \to \Delta$
  for  any $\kappa  \in
(0, \frac{1}{d-2})$ if $ d \geq 4$. 
 
  It turned out that $L \leftrightarrow \Delta$ also does not hold
 in the class of operators $L$ with  continuous  coefficients.
 In \cite{M2} K. Miller gave a simple example  
  of $L \not \to \Delta$  with continuous $a$ 
 for any $ d \geq 2$.
 To the best of the authors' knowledge, 
ours are the first examples of operators $L$
with  continuous  coefficients such that
$\Delta \not \to L$. We give these examples
for any $d\geq3$. 
  
   In this article we are 
also going to construct  examples
 of  $\Delta \not \to L$ and $L \not \to \Delta$ 
  with coefficients discontinuous at the origin
   for  any $\kappa < 1$   and 
any $d \geq 3$.
      An important feature of this article is that our
  proofs  avoid any advanced analogues of the Wiener's
  criterion  for operators \eqref{1.1}.
  We use   barriers to prove that a point 
is   regular relative to $L$.
  In case $\Delta \not \to L$ we construct a generalized solution of the Dirichlet
  problem that is discontinuous at a single boundary point.

  Our examples were inspired by
 Lebesgue's construction of an irregular point relative to
$\Delta$.
 It has to be pointed out that our examples 
also have some
 similarities
with Landis's
 ones mentioned above. 
  In particular, we essentially use the so-called
s-potentials introduced in \cite{L}
 arriving at
 them by combination of probabilistic and PDE
ideas which are outlined in Section~\ref{section 4.19.1}.

\mysection{Generalities about regular points}
                                               \label{section 4.14.1}

Let $ G$ be a bounded domain in $\bR^{d}$ and $B(G)$ be the 
space of bounded Borel measurable functions.
 Take $\delta\in(0,1)$.
We restrict our attention to the operators \eqref{1.1} 
 with coefficients of class $C^\delta_{\loc}( G)$.
Take  a domain $ G'\subset
\bar  G'\subset G$. Suppose that $ G'
\in C^{2+\delta}$. By the H\"older-space theory of linear elliptic equations, for any $f\in C^{\delta}(\bar G')$, 
  there exists a unique solution
   $u \in C^{2+\delta}( \bar G')$  
of the equation $Lu=-f$ in $ G'$ with zero boundary
condition on $\partial G'$. We set
$$
\cR( G')f=u.
$$

Then, let
$ G_{n}$, $n=1,2,...$, be 
a sequence of subdomains
of class $C^{2+\delta}$
such that $ G_{n}\subset G_{n+1} 
\subset\bar{ G}_{n+1}\subset G$,
$ G=\cup_{n} G_{n}$.  For $f\in
C^{\delta}_{\loc}(G)\cap B( G)$
and $x\in G$ define  
\begin{equation}
                     \label{3.16.2}
\mathcal{R}( G)f(x)=\lim_{n\to\infty}
\mathcal{R}( G_{n})f(x).
\end{equation}
This definition coincides with Definition 7.2.2
of \cite{Kr_96}. We will borrow a few facts as well
from Chapter 7 of that book. It has to be said,
however, that in Chapter 7 of \cite{Kr_96} the coefficients
of $L$ are supposed to be of class $C^{\delta}(\bR^d)$
rather than $C^{\delta}_{\loc}( G)$. The reader will 
easily check that what we are going to use
is  proved word for word in the same way as in \cite{Kr_96} 
using only that the coefficients
of $L$ are in $C^{\delta}_{\loc}( G)$.
Part of what is below can also be obtained
from  Remark 1.5.1   and Section 1.8   in \cite{L2}.

We know from \cite{Kr_96} that the limit in \eqref{3.16.2}
exists, does not depend on the choice of
$ G_{n}$, and defines an operator satisfying 
$|\cR( G)f|\leq \cR( G)|f|\leq N\sup|f|$,
where the constant $N$ is independent of $f$.

Interior estimates show that, for any domains
$ G'
\subset \bar G' \subset   G'' \subset \bar G''
\subset  G$ there exists a constant $N$ such that
$$
\|\cR( G) f\|_{C^{2+\delta}( \bar G' )}\leq
     N\| f\|_{C^{\delta}( \bar G'')}+    N\sup_{  G }|f|.
$$
We also know that $L\cR( G)f=-f$ in $ G$
if $f\in C^{\delta}_{\loc}(G)
\cap B( G)$.

According to Definition 7.4.1   of \cite{Kr_96} a point $p  \in\partial G$
is called  {\em regular\/}
(relative to $L$ and $ G$) if
 \begin{equation}
                                              \label{4.22.1}
\lim_{ G\ni x\to p  }\mathcal{R}( G)1(x)=0.
 \end{equation}
Otherwise  $p$  is an irregular  point. Observe that
if $ G'\subset G$, $p  \in\partial G'\cap
\partial G$, and $p  $ is regular relative to $L, G$,
then it is also regular relative to $L, G'$. This follows from
the fact that $\cR( G')1\leq\cR( G)1$.

It turns out (Theorem 7.4.7 of \cite{Kr_96}) that,
for any $c>0$, a point $p  $ is   regular relative to
 $L, G$ if and only if
it is regular relative to $L, G\cap B_{c}(p  )$,
where we use the notation
$$
B_{r}(p  )=\{x\in\bR^{d}:|x-p  |<r\},\quad B_{r}=B_{r}(0).
$$

 According to Definition 7.3.2 of \cite{Kr_96},
if $g\in C^{2+\delta}(\bar{ G})$ and 
$x\in G$,  
 \begin{equation}
                                                     \label{4.14.5}
\pi( G)g(x):=g(x)+\mathcal{R}( G)(Lg)(x).
\end{equation}
Obviously, $\pi( G)g\in C^{2+\delta}_{\loc}( G)$
and $L\pi( G)g=0$ in $ G$. By the interior estimates 
for $L$-harmonic functions, for any domain $ G'
\subset\bar G'\subset G$ there exists a constant $N$
such that for any $g\in C^{2+\delta}(\bar{ G})$ 
$$
\|\pi( G) g\|_{C^{2+\delta}( \bar G' )}
\leq\nliminf_{n\to\infty}\|\pi( G_{n})
g\|_{C^{2+\delta}( \bar G' )}
\leq N \nliminf_{n\to\infty}\sup_{\partial G_{n}}|g|
\leq N\sup_{\partial G }|g|.
$$
The inequality between the extreme terms allows one in \cite{Kr_96}
to uniquely extend $\pi( G)$ by continuity to $g\in C(\partial G)$
and obtain an operator $\pi( G)$ such that, for any
$g\in C(\partial G)$, we have $\pi( G)g\in C^{2+\delta}_{\loc}( G)$,
$L\pi( G)g=0$ in $ G$, and $0\leq  \pi( G)g\leq 1$ in $ G$
if $0\leq g\leq1$ on $\partial G$.

By Exercise 7.4.12 of \cite{Kr_96}, a point $p  \in\partial G$
is regular if and only if
\begin{equation}
                                                      \label{4.14.4}
\lim_{ G\ni x\to p  }\pi( G)g(x)=g(p  )
 \end{equation}
for any continuous $g$ given on $\partial G$.  
The fact that the regularity implies \eqref{4.14.4}
for $g\in C^{2+\delta}(\bar G)$ follows from  definition
\eqref{4.14.5} and the fact that $|\cR( G)(Lg)|\leq \cR( G) 1\sup|Lg|$.   In the general case one uses uniform approximations
of $g$ by polynomials and the fact that $|\pi( G)g -\pi( G)q |
\leq\sup|g-q|$.
The opposite
implication follows if one applies definition \eqref{4.14.5} to
$g=|x-p|^{2}$ and observes that $Lg\geq 2d \theta$ and 
$\cR(G)(Lg)
\geq   2d \theta \cR( G)1\geq0$. 

According to Theorem 7.6.4 of \cite{Kr_96} 
a point $p  \in\partial G$
is regular if, for an $r>0$ and $ G_{r}:= 
G\cap B_{r}(p)$,
there exists a function $w$ (called a barrier at $p$) such that
$$
Lw\leq0\quad{\rm in}\quad G_{r},
\quad \inf_{x\in G_{r}\setminus G_{\rho}}
w(x)>0  \quad\forall\rho\in(0,r),
\quad \lim_{ G\ni x\to p}w(x)=0.
 $$

We are also going to use the following.

\begin{lemma}
                                        \label{lemma 4.14.1}
Let $p  \in\partial G$
and $u,w\in C^{2+\delta}(\bar G\setminus B_{\varepsilon}(p ))$
for any $\varepsilon>0$. Suppose that $u,w\geq0$, $Lu,Lw  \leq  0$ in $ G$,
$$
 \beta:=\nliminf_{\substack{x\in\partial G \\x\to p  }}u(x)>
\nliminf_{\substack{x\in  G\\x\to p  }}u(x)=: \alpha ,
$$
$$
\lim_{ G\ni x\to p  }w(x)=\infty. 
$$
 
Then $p  $ is an irregular point relative to $L, G$.

\end{lemma}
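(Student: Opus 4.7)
The plan is to argue by contradiction: assume $p$ is regular relative to $L,G$ and show that $\nliminf_{G\ni x\to p}u(x)\geq \beta$, contradicting $\alpha<\beta$. Pick $\gamma\in(\alpha,\beta)$. Since $\nliminf_{\partial G\ni x\to p}u(x)=\beta>\gamma$, choose $r>0$ so small that $u(x)\geq\gamma$ for every $x\in\partial G$ with $0<|x-p|\leq r$. By the remark following \eqref{4.22.1}, $p$ remains regular for $L$ on $G_r:=G\cap B_r(p)$. On $\partial G_r$ construct a continuous cutoff $g$ with $g(p)=\gamma$, $0\leq g\leq\gamma$, and $g\equiv 0$ on $\partial G_r\cap\partial B_r(p)$ (e.g.\ $g(y)=\gamma\max(0,1-2|y-p|/r)$); by the choice of $r$ one then has $g\leq u$ on $\partial G_r\setminus\{p\}$.

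The heart of the argument is the comparison $v\leq u+\eta w$ in $G_r$ for every $\eta>0$, where $v:=\pi(G_r)g$. Note $v\in C^{2+\delta}_{\loc}(G_r)$, $0\leq v\leq\gamma$, and by \eqref{4.14.4} (equivalent to regularity) $v(x)\to\gamma$ as $G_r\ni x\to p$. The function $f:=v-u-\eta w$ lies in $C^{2+\delta}_{\loc}(G_r)$ and satisfies $Lf=-Lu-\eta Lw\geq 0$, so it is $L$-subharmonic in $G_r$; to invoke the maximum principle I verify $\nlimsup_{x\to y}f(x)\leq 0$ at every $y\in\partial G_r$. At $y=p$, $u+\eta w\to\infty$ while $v\leq\gamma$, giving $\nlimsup f=-\infty$. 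At $y\in\partial G\cap\bar B_r(p)\setminus\{p\}$, $u$ and $w$ are continuous at $y$ with $u(y)\geq\gamma\geq v$ and $w(y)\geq 0$, so $\nlimsup f\leq-\eta w(y)\leq 0$. At $y\in\partial B_r(p)\cap G$ the exterior ball at $y$ together with $a\in C^{\delta}_{\loc}(G)$ produces a classical barrier, and by the barrier criterion cited above ($\cite{Kr_96}$, Thm.~7.6.4) the point $y$ is regular for $L,G_r$; hence $v(x)\to g(y)=0$, giving $\nlimsup f\leq -u(y)-\eta w(y)\leq 0$. The maximum principle for $L$-subharmonic functions then yields $f\leq 0$ in $G_r$.

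Fixing $x\in G_r$, where $w(x)<\infty$ by continuity of $w$ on $\bar G\setminus\{p\}$, and letting $\eta\downarrow 0$ gives $v(x)\leq u(x)$ throughout $G_r$. Taking $x\to p$ along a sequence realising the $\nliminf$ defining $\alpha$ then forces $\gamma=\lim v(x)\leq\alpha$, contradicting $\gamma>\alpha$. The step I expect to be the main obstacle is the comparison $v\leq u+\eta w$, and specifically the boundary estimate at the interior portion $\partial B_r(p)\cap G$ of $\partial G_r$: that forces one to appeal to the local H\"older regularity of $a$ and an exterior-ball barrier to conclude that those points are regular for $L,G_r$, so that $v$ vanishes there; on the remaining parts of $\partial G_r$ the calibration $u\geq\gamma$ near $p$ on $\partial G$ and the blow-up of $w$ at $p$ do the work.
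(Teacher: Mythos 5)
Your argument is correct, but it establishes the key comparison by a different mechanism than the paper. You argue by contradiction, localize to $G_r=G\cap B_r(p)$, take a continuous cutoff datum $g$ with $g(p)=\gamma$, and prove $\pi(G_r)g\leq u+\eta w$ by the classical maximum principle, which forces you to check the boundary behavior of $f=v-u-\eta w$ pointwise on $\partial G_r$ — in particular to show that the artificial spherical portion $\partial B_r(p)\cap G$ consists of regular points via an exterior-ball barrier (valid here, since such a barrier works for uniformly elliptic $L$ and the coefficients are in $C^{\delta}_{\loc}(G)$, so Theorem 7.6.4 of \cite{Kr_96} applies), while the blow-up of $w$ handles $p$ and the calibration $u\geq\gamma$ handles $\partial G$ near $p$. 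The paper instead works directly in $G$: it applies the identity \eqref{4.14.5} on the punctured domains $G\setminus\bar B_{\varepsilon}(p)$, uses positivity of $\cR$ and monotonicity of $\pi$ to get $u+\eta w\geq\pi(G\setminus\bar B_{\varepsilon}(p))[(u+\eta w)\wedge\gamma]$, passes to the limit $\varepsilon\downarrow0$ (Exercise 7.3.5 of \cite{Kr_96}), compares with $g=(u\wedge\gamma)$ on $\partial G\setminus\{p\}$, $g(p)=\gamma$, and lets $\eta\downarrow0$; this yields $u\geq\pi(G)g$ and hence the failure of \eqref{4.14.4} directly, with no contradiction hypothesis, no localization, and no regularity analysis of any auxiliary boundary. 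What your route buys is self-containedness at the classical level (weak maximum principle plus a standard barrier); what the paper's route buys is economy within the $\cR/\pi$ calculus already set up in Section 2 and avoidance of the extra step of verifying regularity on $\partial B_r(p)\cap G$. Both proofs use the same direction of the equivalence \eqref{4.14.4}, the same device of adding $\eta w$ to absorb the singularity at $p$, and the same final evaluation along a sequence realizing $\alpha$, so the underlying idea coincides even though the implementation differs.
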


\begin{proof}
 Take $\gamma\in(\alpha,\beta)$ and
$\varepsilon, \eta >0$. By definition, for
$x\in  G\setminus\bar B_{\varepsilon}(p  )$,  
$$
u(x)+ \eta  w(x)= - \cR( G\setminus\bar B_{\varepsilon}(p  ))(Lu+ \eta 
Lw)+\pi( G\setminus\bar B_{\varepsilon}(p  ))(u+  \eta  w)(x)
$$
$$
\geq \pi( G\setminus\bar B_{\varepsilon}(p  ))[(u+ \eta  w)\wedge\gamma](x).
$$
We set $[(u +  \eta  w)\wedge\gamma] (p  )=\gamma$ and then 
$(u +  \eta  w)\wedge\gamma$ becomes  
a continuous function in $\bar G$. By sending $\varepsilon
\downarrow 0$ and using   Exercise 7.3.5  from Chapter 7 of \cite{Kr_96},
we obtain for $x\in G$ that
$$
u(x)+  \eta  w(x)\geq\pi( G)[(u + \eta  w)\wedge\gamma] (x)
$$
For $y\in\partial G$, $y\ne p  $ define $g( y )=u(y)  \wedge\gamma$
and set $g(p  )=\gamma$. Then $g$ is a continuous function on
$\partial G$ and $ (u+  \eta  w)\wedge\gamma\geq g$
on
$\partial G$. It follows that
 $
u(x)+ \eta  w(x)\geq\pi( G)g(x)
 $
in $ G$ and, since $  \eta >0$ is arbitrary,
$$
u(x)\geq  \pi( G)g(x),\quad
\alpha    \geq \nliminf_{\substack{x\in  G\\x\to p  }}
\pi( G)g(x),\quad g(p  )=
\gamma>   \alpha    \geq \nliminf_{\substack{x\in  G\\x\to p  }}
\pi( G)g(x),
$$
the latter showing that $p  $ is not regular. The lemma
is proved.                                    \end{proof}
\begin{remark} 
                             \label{remark 2.4}
 Here we present an example of 
 an irregular relative to $\Delta$
  boundary point which is due to H. Lebesgue \cite{L}. 
The goal of this presentation is to make the reader familiar
with some techniques used below in more complicated situations.
   In case $d=3$, Lebesgue
 proved the existence of a domain
for which the origin is not 
 regular relative to $\Delta$.
P.S. Urysohn (\cite{U}) showed that this holds
for $ G$ defined below if $r(x_{1})<\exp(-3/x_{1})$.

          For  $ d = 3$, H.~Lebesgue considers
\begin{equation}
                                                  \label{4.14.1}
         u(x_{1},r ) = \int_0^1 
\frac{ t } { [( t - x_{1})^2 + r^2]^{1/2} } \, dt,\quad r=(x_{2}^{2}
+x_{3}^{2})^{1/2}.
\end{equation}
    It is easy to see that $u $ is a harmonic function on $\bR^3$ except the interval 
   $[0, 1]$ on the $x_1$-axis. The argument
of H.~Lebesgue is that the origin lies on the boundary of the domain between the closures of the level sets $u=1/2$ and $u=2$ and is   irregular, since $u(0+,0,0)=1\ne2$, in light of Lemma \ref{lemma 4.14.1} in which
one takes $w=1/|x|$.

 One can extract some more information
about irregular surfaces in this example.
For $0<x_{1}\leq 1/2$ we have
$$
  u(x_1, r) = u_{1}(x_{1},r)+u_{2}(x_{1},r),
$$
where
$$
u_{2}(x_{1},r):=\int_{2x_1}^{1} \frac{ t } { [( t - x_1)^2 + r^2]^{1/2  } }\,dt\to 1
$$
as $(x_{1},r)\to0$ by the dominated convergence theorem
(in the integrand $t-x_{1}\geq(1/2)t$
and the integrand is less than 2), and
$$
   u_{1}(x_{1},r):=      \int_{0}^{2x_1} \frac{ t } { [( t - x_1)^2 + r^2]^{1/2  } }\,dt 
\,   =\,
r \psi(x_{1}/r) ,
$$ 
where 
$$
\psi(y):=y\int_{  - y }^{y}\frac{1}{(s^{2}+1)^{1/2}}\,ds  
$$
and the equality  is
obtained by the change of variables $t=x_{1}+rs$
and the observation that
$s/(s^{2}+1)^{1/2}$ is odd and its integral over a symmetric interval is zero.
  By using  L'Hospital's rule 
one easily checks that 
$ \psi(t) - 2 t \ln t=[\psi(t)/t- 
2 \ln t]/(1/t)\to0$ 
as $t\to \infty$.
 Also $x_{1}\ln x_{1}\to0$ as $x_{1}
\downarrow 0$, so that $x_{1}\ln (x_{1}/r)-x_{1}\ln (1/r)\to0$ and
              \begin{equation} 
                                           \label{2.5} 
 u(x_1, r)  -[1+ 2x_{1}\ln(1/r)]\to0.
            \end{equation} 
 if $x_{1}/r\to\infty$  and $x_{1}\downarrow0$.

   Now take $\varepsilon\in(0,1)$ and define
    \begin{equation} 
                                      \label{2.6}
      r(x_1)  =  e^{-  \varepsilon/x_{1}} ,
\quad \phi(x_{1}):=u(x_{1},r(x_{1})),\quad x_1 > 0.
    \end{equation}
By  \eqref{2.5}  we have 
  $\phi(0+)= 
1+2\varepsilon $.

Next, define $r(0)=0$ and consider
$$
 G=B_{1} \setminus\{x:(x_{2}^{2}
+x_{3}^{2})^{1/2}\leq r(x_{1}),x_{1}\geq0\}.
$$
It turns out that the origin is an irregular point of
$\partial G$, no matter how small $\varepsilon>0$ is.
This follows at once from Lemma \ref{lemma 4.14.1} if
one takes $w=1/|x|$ and notes that
$u(0+,0,0)=1<1+\varepsilon$.

The above argument is simpler than the one in \cite{U}, yielding
a more general result, and shorter than the one in
 L.  Helms's  elaboration of H.~Lebesgue's original argument  (see 4.4.11 in \cite{LH}), based on calculating $u(x_{1},r)$
explicitly.
 
  The surface  
  $$
     \{ x:  (x_{2}^{2}
+x_{3}^{2})^{1/2}=  e^{-\varepsilon/x_{1}}, x_1 \geq 0\}
  $$   
 is called the Lebesgue spine.
    \end{remark}

\begin{remark}
                                        \label{remark 5.23.1}
It turns out that the origin is an irregular point
for the Laplacian even in a 
 smaller  domain
$$
G'=G\cap(-G)=B_{1} \setminus\{x:(x_{2}^{2}
+x_{3}^{2})^{1/2}\leq e^{-\varepsilon/|x_{1}|}\}.
$$
This easily  follows from   Wiener's test and the fact that
the origin is irregular relative to $G$. 
This is also proved in Remark \ref{remark 4.18.1}.
\end{remark}

  \mysection{ Main results}
                                         \label{section 2}

 Let $d\geq3$. We represent points $x\in\bR^{d}$ as $x=(x_{1},x')$,
where $ x' = (x_2, x_3, \ldots, x_d) \in \bR^{d-1}$.
 
   Let $c>0$ be a number and let $r \in C([0,c])$ be a nonnegative
function such that  $r(0) = 0$. We say that the surface 
  \begin{equation}
                                                   \label{4.6.1}
    |x'|  = r(x_1),\quad x_1 \in [0,c],
   \end{equation}
   is regular relative to $L$   if the origin is regular relative to
$L, G$, where
  \begin{equation}
                                                   \label{4.17.1}
  G= B_{c}  \setminus \{x\in\bR^{d}: |x'|  \leq r(x_1), x_1 \in [0,c]\}.
  \end{equation}
 
 Remember that, according to the definitions in Section
\ref{section 4.14.1}, to talk about regular points of
$ G$ we need the coefficient $L$ to belong to
$C^{\delta}_{\loc}( G)$ for some $\delta\in(0,1)$.

 We are going to consider functions 
$ u(x_1, r), x_1 \in \bR, r \in \bR_+=[0,\infty)$. 
When it makes sense, denote  
 $$ 
  u_{x_1} =
 \frac{ \partial u}{\partial x_1}, \quad
 u_r  = \frac{ \partial u}{\partial r}, \quad 
u_{x_1 x_1} = \frac{\partial^2 u}{\partial x_1 \partial x_1}, \quad 
 u_{rr} = \frac{\partial^2 u}{\partial r \partial r}.
$$

  Let $\lambda (x)$ be any positive bounded function on $\bR^{d}$ and 
   $ a_{\lambda} ( x )$ be a $(d-1) \times (d-1)$
   matrix-valued function with the following entries:  
\begin{equation} 
                                  \label{2.2}
    a_{\lambda}^{ij}  ( x ) =   \lambda(x)   \delta^{ij} +
    ( 1 - \lambda (x) ) \frac{x_i x_j}{|x'|^2},\quad i,j =2, \ldots, d, x' \neq 0,
   \end{equation}
  $$
      a_{\lambda}^{ij} (x) = \delta^{ij}, \quad  i,j =2, \ldots, d, x' = 0.
   $$
  Next, we  define the $d\times d$ matrix-valued $\tilde a_{\lambda}$ as follows
  $$
   \tilde  a_{\lambda}^{1j} (x) = \delta^{1j}, \quad
   \tilde a_{\lambda}^{i1} = \delta^{i1}, \quad 
     i, j =1, \ldots, d, \,
    $$
  \begin{equation} 
                                   \label{2.3}
    \tilde a_{\lambda}^{ ij} (x) = a_{\lambda}^{ij}(x), 
\quad i,j = 2, \ldots ,d.
    \end{equation}
  
  Note that, for   $ x',z' \in \bR^{d-1} \setminus \{0\}$ and
$ (z',  x') = 0 $ we have 
   $$
    a_{\lambda} ( x )  x' =   x'
   ,\quad
       a_{\lambda} ( x )  z' =  \lambda (x)   z'.
     $$
  This implies that, if $x'\ne0$, the matrix $ a_{\lambda} ( x)$
    has 
 (at least) one eigenvalue equal to 1
and the remaining $d-2$ eigenvalues are equal to $\lambda(x)$.

  Let $L_{\lambda}$ be the operator \eqref{1.1} with coefficients $\tilde a_{\lambda}$.
  Then it is a uniformly elliptic operator 
   with bounded coefficients.
 Also note that, if $x' \neq 0$, we have
    $
     \kappa (x) =  \lambda (x)  
    $
       for
   $ \lambda (x) \leq 1$ 
    and 
    $
     \kappa (x) = 1/\lambda (x) 
    $
  for
   $ \lambda (x) > 1$.
\begin{remark}
                                              \label{remark 4.12.1}
  Even if $\lambda\in C^{\infty}(\bR^{d})$, the coefficients
of $L_\lambda$ might fail to be infinitely 
differentiable 
 but   they  still belong to
 $C^{\infty}_{\loc}(   \bR^d \setminus   \{x'=0\} )$.
Furthermore, 
if $\lambda\in C^{\infty}_{\loc}(\bR^d \setminus \{x'=0\})$
and $\lambda$ is continuous at the origin and $\lambda( 0)=1$, then 
$a_{\lambda}\in C^{\infty}_{\loc}(\bR^d \setminus \{x'=0\})$
and $a_{\lambda}$ is  continuous at the origin.  
 In particular, it is continuous in 
the closure of $ G'$ 
 defined in  \eqref{4.17.2}  below
for any continuous
 increasing function $r(t), t \in [0,c]$ such that $r(t)>0$
for $t>0$ and $r(0) = 0$.
 
\end{remark}

  Let $u$ be a sufficiently smooth function such that, for a function
$$
v(x_{1},r),\quad x_{1}\in\bR,\quad
r\in\bR_{+}=[0,\infty),
$$ we have
  $ 
   u(x) = v(x_1, |x'|).
  $
 Then by what was said about the eigenvalues
   of the matrix $a_{\lambda} ( x)$, 
for $x'\ne0$ and $r=|x'|$,
    we have
   \begin{equation}
                                       \label{1.2}
  L_{\lambda} u(x )  =  v_{x_1 x_1}  + v_{rr} + 
\lambda(x)\frac{d-2  }{r} v_r. 
    \end{equation}

  Let us state the main results of this article.
The first two theorems show that
$$
\Delta\not \to L.
$$
 
   \begin{theorem} 
                             \label{theorem 2.1}
   For any $\varepsilon \in (0, 1)$  and 
   $\lambda(x)\equiv 1/\varepsilon$, 
there exists a domain of type 
\begin{equation}
                                     \label{4.17.2}
 G'=B_{c}  \setminus\{x:|x'|\leq r(|x_{1}|)\},
\end{equation}
where $r$ is a continuous function on $[0,c]$
with $r(t)>0$ for $t>0$ and $r(0)=0$,
for which the origin is regular for $\Delta$ and
not regular for $L_{\lambda}$. Obviously,
  the coefficients of
$L_{\lambda}$ are infinitely differentiable
in $\bar G' \setminus \{0\}$
 and 
$\kappa  =  \varepsilon $
  away from the $x_1$-axis.
   \end{theorem}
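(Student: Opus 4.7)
The strategy is to mimic Lebesgue's construction from Remark \ref{remark 2.4}, transplanted into the ``effective dimension'' of $L_\lambda$. Put $n := 2 + (d-2)/\varepsilon$; since $\varepsilon < 1$, $n > d$. By \eqref{1.2}, for $u(x) = v(x_1, |x'|)$ the operator $L_\lambda$ acts as $v_{x_1 x_1} + v_{rr} + \tfrac{n-2}{r} v_r$, which is exactly the Laplacian of $\bR^n$ on functions depending only on $x_1$ and $|y''|$, $y'' \in \bR^{n-1}$. In particular, the shifted Newtonian kernel $K_t(x) := [(t-x_1)^2 + |x'|^2]^{-(n-2)/2}$ satisfies $L_\lambda K_t = 0$ wherever $(x_1, |x'|) \neq (t, 0)$.

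Following Lebesgue, I would introduce the nonnegative potential
\[
  u(x) = v(x_1, |x'|),\qquad
  v(x_1, r) = \int_{-1}^{1} \mu(|t|)\, [(t - x_1)^2 + r^2]^{-(n-2)/2}\, dt,
\]
for a continuous nonnegative density $\mu$ on $[0, 1]$ with $\mu(0) = 0$, to be chosen. Then $u$ is $L_\lambda$-harmonic off the axis segment $\{(t, 0, \ldots, 0) : t \in [-1, 1]\}$. Running in dimension $n$ the asymptotic analysis of Remark \ref{remark 2.4} (splitting at $|t| = 2|x_1|$, the substitution $t = x_1 + r\tau$, dominated convergence on the outer piece, a symmetric-integral computation on the inner piece) yields a decomposition $v = v_1 + v_2$ with $v_2 \to A$ (a finite constant $\geq 0$) along every approach $(x_1, r) \to 0$, and $v_1(x_1, r) \approx r^{3-n}\mu(x_1)\,[C_n + o(1)]$ when $x_1/r \to \infty$, where $C_n := \int_{-\infty}^\infty (s^2+1)^{-(n-2)/2}\, ds > 0$; while $v_1 \to 0$ along directions with $r/|x_1|$ bounded below. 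Calibrate $\mu$ so that the level set $\{v \equiv A + C_n/2\}$ is a continuous profile $|x'| = r(x_1)$ with $r(0) = 0$, $r(t) > 0$ for $t > 0$; set $G' := B_c \setminus \{|x'| \leq r(|x_1|)\}$. Then $\nliminf_{G' \ni x \to 0} v(x) = A < A + C_n = \nliminf_{\partial G' \ni x \to 0} v(x)$.

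For $L_\lambda$-irregularity of the origin, apply Lemma \ref{lemma 4.14.1} with the $u$ above and the barrier $w(x) := |x|^{-\gamma}$ for any $\gamma \in (0, n-2)$. A direct computation (using that $w$ depends only on $(x_1^2 + |x'|^2)^{1/2}$ together with the effective-dimension identity) gives $L_\lambda w = -\gamma(n-2-\gamma)|x|^{-\gamma-2} \leq 0$, and clearly $w \to \infty$ at $0$. The jump of $v$ just described is exactly the hypothesis of Lemma \ref{lemma 4.14.1}, which yields irregularity.

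For $\Delta$-regularity, calibrate $\mu$ so that the resulting spine $r(x_1)$ is fatter than any $\Delta$-irregular profile in $\bR^d$: when $d = 3$ this is automatic (any polynomial cusp $r = |x_1|^s$ thin enough for $L_\lambda$-irregularity is still $\Delta$-regular); when $d \geq 4$, choose a logarithmic-type profile such as $r(x_1) = |x_1|/(\log(1/|x_1|))^\beta$ with $\beta \in \bigl(1/(n-3),\ 1/(d-3)\bigr)$, an interval that is nonempty precisely because $n > d$. A $\Delta$-barrier at the origin for $G'$ is then produced by the analogous dimension-$d$ potential construction (replacing $n$ by $d$ and using a matching density $\tilde \mu$): the parallel asymptotic analysis yields a bounded $\Delta$-harmonic $\tilde v$ on $G' \cap B_\rho$ for small $\rho$, and $\tilde w := \tilde v_0 - \tilde v$ (with $\tilde v_0$ the generic limiting value along approaches inside $G'$) is a barrier, whence regularity follows from Theorem 7.6.4 of \cite{Kr_96}. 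The chief obstacle is precisely this dual asymptotic calibration of $\mu$: simultaneously thin enough for $L_\lambda$ to see the jump and fat enough for a $\Delta$-barrier to exist. The room to do this is exactly the gap $n > d$, which is opened by the strict inequality $\varepsilon < 1$.
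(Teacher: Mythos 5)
Your irregularity half is essentially the paper's own construction in different clothes: your kernel exponent $(n-2)=(d-2)/\varepsilon$ is exactly the paper's $\mu$ in \eqref{4.11.1}, your identification of $L_\lambda$ with a radial Laplacian in ``dimension'' $n=2+(d-2)/\varepsilon$ is \eqref{1.2}, your barrier $w=|x|^{-\gamma}$, $\gamma\le n-2$, and the jump $\alpha=A<\beta$ fed into Lemma \ref{lemma 4.14.1} are the paper's steps, and your parameter window $\beta\in(1/(n-3),1/(d-3))$ is the same tradeoff as the paper's condition $(d-3)\alpha<(d-2)/\varepsilon-1$. The one stylistic difference is that you define the spine as a level set of the potential (Lebesgue/Urysohn style), whereas the paper prescribes $r(x_1)$ explicitly and shows via Lemma \ref{lemma 4.11.1} that $v\to\infty$ along it; the paper's variant spares you the (doable, via $v_r<0$, but unaddressed) verification that your level set really is a graph $|x'|=r(|x_1|)$ with $r$ continuous, $r(0)=0$, $r>0$ off $0$. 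Two smaller points: with your normalization the boundary liminf is $A+C_n/2$, not $A+C_n$; and the finiteness of $A$ (i.e.\ $\int_0\mu(t)t^{-(n-2)}\,dt<\infty$) is precisely where the lower bound $\beta>1/(n-3)$ is used --- you impose it but never say why it is needed.

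The genuine gap is in the $\Delta$-regularity half for $d\ge4$. You propose to manufacture a $\Delta$-barrier as $\tilde w=\tilde v_0-\tilde v$ with $\tilde v$ a dimension-$d$ potential for some ``matching density,'' and assert that ``the parallel asymptotic analysis'' makes it a barrier. That is exactly the hard step, not a routine one: a barrier requires $\tilde v<\tilde v_0$ throughout $G'\cap B_r$ with $\inf_{G_r\setminus G_\rho}(\tilde v_0-\tilde v)>0$, and for such potentials this positivity is delicate --- it is what the paper's Lemma \ref{lemma 7.1}(ii) and the estimate \eqref{5.23.3} are for in the proof of Theorem \ref{theorem 2.2}, and for a generic density it is simply false (near the spine the potential can exceed its limit at the origin, as in Lebesgue's own example). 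As written, your sketch does not supply this, so the regularity claim for $\Delta$ is not proved. The economical fix is the paper's: since you have already arranged $\beta(d-3)<1$, the It\^o--McKean/Wiener criterion (Section 7.11 of \cite{IM}, via $\int_0(r(x)/x)^{d-3}\,dx/x=\infty$, together with Remark \ref{remark 4.17.1}) gives $\Delta$-regularity of the origin immediately; the same citation also disposes of your unproved ``automatic'' claim for $d=3$ (there one checks $\int_0 dx/(x|\ln(r(x)/x)|)=\infty$). Alternatively, keep the barrier route but then you must carry out the quantitative analysis of Lemma \ref{lemma 7.1} for your $\tilde\mu$, which is a substantial addition, not a remark.
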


\begin{remark}
                                               \label{remark 4.17.1}
 By what was said right after \eqref{4.22.1},
if the origin is regular relative to $L,G$, where $ G$
is taken from \eqref{4.17.1}, then it is also regular relative 
to $L,G'$ (this fact can be also easily explained in case $L=\Delta$
by using Wiener's test since the complement of $G'$ is larger
than that of $G$).
In the proof of Theorem \ref{theorem 2.1} a smooth function $r$ will be chosen so that
the origin is regular relative to $\Delta, G$.   
It is also regular relative to $\Delta, G'$ by the above.

Observe further that one can
find a smooth function $\zeta(x)$ with values
in $[\varepsilon,1]$ given in $\bR^{d}\setminus\{0\}$
such that it equals $\varepsilon$  on an open set
containing the interval $[-c,0)$ of the $x_{1}$-axis
and $\zeta(x)=1$ in $G'$. 
Then, for  
   $\nu(x) =\zeta(x)/\varepsilon$ and the operator $L_{\nu}$,
the origin is still an irregular point of $\partial G'$,
since $L_{\nu}=L_{\lambda}$ in $ G'$,
which implies (see Section \ref{section 4.14.1}) that
the origin is also an irregular point relative to $L_{\nu},
 G$. Finally,
notice that for the operator $L_{\nu}$ we have $\kappa\geq\varepsilon$,
and the coefficients of $L_{\nu}$ have only one point of
discontinuity in $\bar G$. 

\end{remark}

The coefficients of $L_{\lambda}$ are discontinuous
 in $\bar G'$ 
in Theorem \ref{theorem 2.1}. Here is a stronger result
the proof of which is more involved.
   \begin{theorem} 
                                       \label{theorem 2.3}
    
  There is a continuous function $\lambda(x) > 0$ and a domain $G'$ of type \eqref{4.17.2}, 
      where $r$ is a continuous function on $[0,c]$
  with $r(t)>0$ for $t>0$ and $r(0)=0$, such that
  the origin is regular relative to $\Delta, G'$ and
  irregular relative to $L_{\lambda}, G'$.
 Moreover, 
 the coefficients of
$L_{\lambda}$ are infinitely differentiable
in $\bar G' \setminus \{0\}$ and continuous in $\bar G'$. 
      \end{theorem}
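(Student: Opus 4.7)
The strategy is to refine the proof of Theorem~\ref{theorem 2.1} by allowing the ratio $\lambda(x)$ to vary. Continuity of $a_\lambda$ up to the origin forces, by Remark~\ref{remark 4.12.1}, $\lambda(0)=1$, so the new difficulty is that the anisotropy of $L_\lambda$ degenerates into $\Delta$ at $0$ and the single fixed-exponent $s$-potential that produces irregularity in Theorem~\ref{theorem 2.1} is no longer available. I would take $\lambda(x)=1+\mu(|x|)$ with $\mu\in C^\infty((0,c])$, $\mu>0$ on $(0,c]$, $\mu(0)=0$, and decaying to $0$ very slowly. A direct computation gives $L_\lambda v=(d-2)^2(1-\lambda)|x-y|^{-d}$ for $v(x)=|x-y|^{2-d}$, so since $\lambda\ge 1$ the function $w(x)=|x|^{2-d}$ satisfies $L_\lambda w\le 0$ in a neighborhood of $0$ and blows up at $0$, providing the auxiliary function required by Lemma~\ref{lemma 4.14.1}.

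For the supersolution I would use a \emph{scale-dependent} $s$-potential. Set $\rho_n=2^{-n}$, $\mu_n=\mu(\rho_n)\downarrow 0$, $s_n=(1+\mu_n)(d-2)$, and put
\begin{equation*}
u(x)=\sum_{n\ge 0} c_n\int_{I_n}\frac{|t|^{\beta_n}\,dt}{|(x_1-t,x')|^{s_n}},\qquad I_n=[\rho_{n+1},\rho_n]\cup[-\rho_n,-\rho_{n+1}],
\end{equation*}
with weights $c_n,\beta_n>0$ to be chosen. The standard $s$-potential identity $L_\lambda|x-y|^{-s}=s(s-\lambda(d-2))|x-y|^{-s-2}$ (off $y$) shows that on the annulus $A_n=\{\rho_{n+1}\le|x|\le\rho_n\}$, where $\lambda\approx\lambda_n:=1+\mu_n$, the $n$-th summand is very nearly $L_{\lambda_n}$-harmonic off its segment, with error of order $|\mu-\mu_n|$. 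Selecting the rate $\mu_n\downarrow 0$ slowly enough and the coefficients $c_n$ to dominate both these errors and the cross contributions of $u_m$ with $m\ne n$ on $A_n$ turns $u$ into a global supersolution $L_\lambda u\le 0$ in $G'$. An asymptotic expansion of each summand along the spine surface, modeled on the one that produced \eqref{2.5} in Remark~\ref{remark 2.4}, then yields a positive scale-$n$ jump of size $\sim c_n\varepsilon_n$ whenever the restriction of the spine $r$ to the $n$-th scale is a rescaled $L_{\lambda_n}$-Lebesgue spine with thinness parameter $\varepsilon_n$. Summing gives $\beta>\alpha$ in the notation of Lemma~\ref{lemma 4.14.1}, hence irregularity of $0$ for $L_\lambda,G'$.

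The main obstacle is the simultaneous balancing of three requirements on the pair $(r,\lambda)$: $r$ must be thin enough on each annulus for the scale-$n$ jump under $L_\lambda$ to survive; yet thick enough in the aggregate that the origin is regular for $\Delta$; and $\lambda$ must tend to $1$ at $0$ while still permitting the jumps at every scale. The crucial point is that the thinness parameters $\varepsilon_n$ can be arranged to tend to $0$: the resulting composite spine is then strictly thicker than any classical Lebesgue spine near the origin, so the $\Delta$-Wiener capacity sum for $\bR^d\setminus G'$ diverges at $0$ (alternatively, an explicit barrier of the type discussed in Section~7.6 of \cite{Kr_96} can be constructed) and $0$ is regular for $\Delta$; meanwhile the infinitely many scale-$n$ contributions still accumulate to a positive total jump. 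Once this balance is achieved, smoothing $r,\lambda,\mu$ off the origin yields $a_\lambda\in C^\infty_{\loc}(\bar G'\setminus\{0\})$, and continuity at $0$ follows from $\lambda(0)=1$ via Remark~\ref{remark 4.12.1}.
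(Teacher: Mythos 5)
Your proposal heads in a genuinely different direction from the paper. The paper does not sum fixed-exponent $s$-potentials over dyadic annuli; it uses a single integral with a \emph{variable} exponent,
$$
u(x,r)=\int_{-c}^{c}\frac{|t|^{\mu(t)}h(t)}{[(t-x)^{2}+r^{2}]^{\mu(t)/2}}\,dt ,
$$
with (for $d=3$) $\mu(t)=1+(\ln|\ln|t||)^{-1}$, $h\equiv1$, $r(t)=|t|^{1+|\ln|\ln|t||}$. Lemma~\ref{lemma 4.10.1} shows $u$ satisfies $u_{xx}+u_{rr}+(\omega/r)u_{r}=0$ with $\omega$ given by a ratio of integrals, Lemma~\ref{lemma 3.2} shows $\omega$ is continuous with $\omega(0,0)=\mu(0)=1$, and Lemma~\ref{lemma 4.11.1} gives the blow-up $\lim_{x\downarrow0}u(x,r(x))=\infty$, so that $\beta=\infty>\alpha$ in Lemma~\ref{lemma 4.14.1} for free. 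Continuity of $\lambda$ at $0$, the supersolution property and the divergence are all read off from explicit formulas, not from an infinite balancing of scales. Remark~\ref{remark 4.24.1} explicitly flags that the ``sum of $s$-potentials'' route (Landis, and then Novruzov with variable singularity) is the prior-art version of what you propose, that the authors deliberately replaced the sums by integrals because they are easier to analyze, and that Novruzov's execution of the sum version contains uncorrected errors.

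The gap in your proposal is precisely at the points that are hard in the sum approach, all of which you state but do not establish. First, the claim that one can choose $c_{n}$, $\mu_{n}$ so as to dominate both the linearization error $|\mu-\mu_{n}|$ and the cross-contributions of $u_{m}$, $m\ne n$, on the annulus $A_{n}$, and thereby obtain $L_{\lambda}u\le0$ globally, is asserted without any estimate; this is a competition among three scales of smallness (how fast $\mu_{n}\downarrow0$, how fast $c_{n}$ decays, how thin the spine is on $A_{n}$) and it is not at all clear it can be won. Second, the ``scale-$n$ jump of size $\sim c_{n}\varepsilon_{n}$'' and the conclusion that ``summing gives $\beta>\alpha$'' are not substantiated: if $\varepsilon_{n}\to0$ and $c_{n}\to0$, one must show that the individual jumps persist as permanent elevations of the boundary trace and that the accumulated total is strictly positive, while simultaneously the interior limit $\alpha$ stays finite; none of this is computed. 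Third, you build $\lambda$ as (roughly) $1+\mu_{n}$ on $A_{n}$, which is only piecewise constant; the final sentence ``smoothing $r,\lambda,\mu$ off the origin yields $a_{\lambda}\in C^{\infty}_{\loc}(\bar G'\setminus\{0\})$'' glosses over the fact that after mollifying $\lambda$ the inequality $L_{\lambda}u\le0$ must be re-verified for the \emph{new} $\lambda$, which is not automatic. As presented, the proposal is a plausible program rather than a proof, and it is the same program whose earlier incarnation the paper says it could not accept as correct; the paper's variable-exponent integral was designed precisely to avoid these pitfalls.
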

\begin{remark}
                                    \label{remark 4.24.1}
The example which we use to prove Theorem \ref{theorem 2.3}
has many similarities with what was used in the past. In particular,
Urysohn \cite{U} used a function which is the sum of the Newtonian
potentials. Landis in Section 4 of \cite{L1} used the sum of $s$-potentials
to show that for $d=3$, generally,  $\Delta \not\to
L $.  We consider $d\geq 3$ and use
integrals, that are easier to analyze,  instead of the sums.
In Landis's construction the operator
$L $ has discontinuous coefficients.  
Novruzov \cite{Na} considered domains of type \eqref{4.17.2} 
again when $d=3$ and tried to push further  Landis's
construction to get an operator with continuous coefficients
showing that $\Delta \not\to L $.
 He deals with sums of potentials of variable singularity, 
as we do with integrals instead
of the sums. His idea, which we
also adopt here, is to first construct a
function and then find an equation which the function
satisfies. However, numerous
rather inexplicable
erroneous statements in  \cite{Na}, which were never
commented on, corrected,
 or substantiated in later publications, do not allow 
one to accept his claimed statements
as true  facts. Furthermore, in the example
in \cite{Na} the origin is not only
irregular relative to $L$ but also {\em irregular\/}
relative to $\Delta$. 
Still in  \cite{KL}  the authors refer to the example in 
\cite{Na} as   a valid example 
showing that 
$\Delta\not \to L  $  with $L $ having
continuous coefficients.
\end{remark}

Here is a result showing that
$L_{\lambda} \not \to \Delta$.

    \begin{theorem}
                                \label{theorem 2.2}
    For any $\varepsilon \in (0, 1)$ and
$\lambda(x) \equiv \varepsilon$,
  there exists a domain $G'$ of type \eqref{4.17.2},
  where $r$ is a continuous function on $[0,c]$
  with $r(t)>0$ for $t>0$ and $r(0)=0$ 
  such that the origin is regular relative to 
  $L_{\lambda}, G'$ and irregular relative to  $\Delta, G'$. 
Moreover, the coefficients of
$L_{\lambda}$ are infinitely differentiable
in $\bar G' \setminus \{ 0\}$
 and $\kappa  =   \varepsilon $
   away from the  $x_1$-axis. 
 
     \end{theorem}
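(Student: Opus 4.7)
The strategy is to construct an explicit cusp domain $G'$ of type \eqref{4.17.2} carrying both (i) a barrier at the origin for $L_\lambda$ (giving regularity by Theorem 7.6.4 of \cite{Kr_96}), and (ii) a Lebesgue-type asymmetric $\Delta$-harmonic potential to which Lemma \ref{lemma 4.14.1} applies (giving irregularity for $\Delta$). Since the authors emphasize that their proofs avoid any advanced analogue of Wiener's criterion for $L_\lambda$, everything will be done by explicit barriers and potentials. The central reduction is formula \eqref{1.2}: on axisymmetric $u(x)=v(x_1,|x'|)$,
$$L_\lambda u = v_{x_1x_1}+v_{rr}+\varepsilon(d-2)\,\frac{v_r}{r},$$
so $L_\lambda$ acts as a Laplace-type operator with effective radial exponent $s:=\varepsilon(d-2)<d-2$; the axisymmetric $L_\lambda$-harmonic kernels are $[(t-x_1)^2+r^2]^{-s/2}$, versus $[(t-x_1)^2+r^2]^{-(d-2)/2}$ for $\Delta$.

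\emph{Barrier for $L_\lambda$.} A direct calculation (via \eqref{1.2}) yields $L_\lambda(|x'|^\alpha)=\alpha(\alpha-1+s)|x'|^{\alpha-2}$. When $s<1$, i.e. $\varepsilon<1/(d-2)$ (which covers the entire range $\varepsilon\in(0,1)$ when $d=3$), the function $w(x)=|x'|^{1-s}$ is $L_\lambda$-harmonic on $\bR^d\setminus\{x'=0\}$, strictly positive in $G'$, tends to $0$ as $x\to 0$ in $G'$ (since $|x'|\to 0$), and is bounded below on $G'\cap B_c\setminus B_\rho$ for each $\rho>0$ (using $|x'|\ge r(|x_1|)>0$ on $\bar G'\setminus\{0\}$ and continuity of $r$); this furnishes a barrier. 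In the remaining range $\varepsilon(d-2)\ge 1$ the naive radial barrier fails, and I instead use an integrated $L_\lambda$-harmonic potential
$$W(x_1,r)=\int_0^c f(t)[(t-x_1)^2+r^2]^{-s/2}\,dt$$
combined (for example as $A-W$ with a suitable constant $A$, or a more refined combination) into an $L_\lambda$-superharmonic function in $G'$. The density $f$ and the cusp rate $r(x_1)$ must be chosen jointly so that the resulting function is nonnegative in $G'$, bounded below on $G'\cap B_c\setminus B_\rho$ for each $\rho>0$, and vanishes as $x\to 0$ in $G'$.

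\emph{Irregularity for $\Delta$.} Mimicking Remark \ref{remark 2.4}, consider the $\Delta$-harmonic integral
$$u(x_1,r)=\int_0^1 t^{d-2}[(t-x_1)^2+r^2]^{-(d-2)/2}\,dt,$$
which reduces to the Lebesgue function \eqref{4.14.1} when $d=3$. The dimension-$d$ analogue of the expansion \eqref{2.5} goes through: the outer piece $\int_{2x_1}^1$ tends to $1$ by dominated convergence, while the inner piece $\int_0^{2x_1}$, after the substitution $t=x_1+rs$, produces a dominant correction of order $x_1\log(1/r)$ for $d=3$ or $x_1^{d-2}r^{-(d-3)}$ for $d\ge 4$ in the regime $x_1/r\to\infty$. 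Choosing $r(x_1)$ so that this correction has a strictly positive limit along the cusp boundary (e.g.~$r(x_1)=\exp(-\delta/|x_1|)$ for $d=3$, or a polynomial cusp of appropriate exponent for $d\ge 4$), one gets $\nliminf_{\text{cusp}}u>\nliminf_{\text{inside}}u$. Lemma \ref{lemma 4.14.1}, applied with the helper $w(x)=|x|^{-(d-2)}$ (which is $\Delta$-superharmonic in $\bR^d\setminus\{0\}$ and blows up at the origin), then yields the $\Delta$-irregularity. The cusp $r(x_1)$ must finally be tuned so that both this asymmetry and the $L_\lambda$-barrier conditions of the previous paragraph hold simultaneously.

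\emph{Main obstacle.} The technical heart is the $L_\lambda$-barrier construction in the range $\varepsilon(d-2)\ge 1$ (so only $d\ge 4$ with $\varepsilon\ge 1/(d-2)$, which is exactly the range not covered by Miller's examples). In this regime $|x'|^{1-s}$ fails to vanish at the origin and the integrated potential $W$ itself blows up along the cusp. One must exploit the cusp shape essentially and coordinate $f$, $r(x_1)$, and the chosen combination (product, difference, or reciprocal of potentials) to produce a function that is genuinely a barrier at the origin while keeping the origin $\Delta$-irregular. The required estimates are analogous in spirit to \eqref{2.5} but adapted to the $L_\lambda$-fundamental solution with exponent $s=\varepsilon(d-2)$.
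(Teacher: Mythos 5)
Your easy range coincides with the paper: for $\varepsilon(d-2)<1$ (in particular all of $d=3$) the paper also uses the explicit $L_\lambda$-harmonic barrier $|x'|^{1-\varepsilon(d-2)}$ together with the Lebesgue spine $r(x_1)=e^{-\varepsilon/x_1}$ and Remark \ref{remark 5.23.1} for the $\Delta$-irregularity. The problem is the range $\varepsilon(d-2)\ge 1$, $d\ge4$, which you correctly identify as the heart of the matter but then leave as a plan rather than a proof. The paper resolves it by a completely explicit choice: $u$ from \eqref{4.11.1} with constant exponent $\mu=\varepsilon(d-2)$ and density $h(t)=|t|^{-1}|\ln|t||^{-\gamma}$, $\gamma>1$, cusp $r(t)=|t|\,|\ln|t||^{-\eta}$ as in \eqref{5.23.4} with $\eta$ chosen so that $\eta(\mu-1)<1<\eta(d-3)$ (possible precisely because $\mu-1<d-3$); the candidate barrier is $v(0)-v(x)$, and proving it really is a barrier requires the splitting $u=u_1+u_2$ of Lemma \ref{lemma 7.1}, the bound $u_1(x_1,r(x_1))\le N|\ln|x_1||^{\eta(\mu-1)-\gamma}\to0$, the derivative asymptotics \eqref{7.1} for $u_2$ along the cusp, and an application of L'Hospital's rule to establish \eqref{5.23.3}. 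None of these quantitative ingredients appears in your sketch: you do not specify $f$, do not specify the cusp, and do not verify nonnegativity or the limit relations that make $A-W$ a barrier, so the statement remains unproved exactly where it is new (this is the range not covered by Miller). The borderline case $\varepsilon=1/(d-2)$ needs a separate logarithmic version of the same estimates, which you also do not address.

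Moreover, your one concrete suggestion for $d\ge4$ points the wrong way. A polynomial cusp $r(x_1)=x_1^{p}$ with $p>1$ does make the origin irregular for $\Delta$, but in the regime $\mu=\varepsilon(d-2)\ge1$ it is too thin for $L_\lambda$: along such a cusp $u_1(x_1,r(x_1))\asymp |\ln x_1|^{-\gamma}(x_1/r(x_1))^{\mu-1}\to\infty$, so $v(0)-v(x)$ becomes negative near the origin and cannot be a barrier; in effect, for a cusp that thin the origin is irregular for $L_\lambda$ as well (heuristically, $L_\lambda$ behaves like a Laplacian in dimension $\varepsilon(d-2)+2\ge3$, for which polynomial cusps are already exceptional). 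The whole point of the paper's choice is that a logarithmic correction of a linear cone, thin enough that $\int_0 (r(t)/t)^{d-3}\,dt/t<\infty$ yet thick enough that $\eta(\mu-1)<1$, threads the needle between the two operators; ``tuning later'' with a polynomial cusp cannot succeed. For the $\Delta$-irregularity itself your Lebesgue-potential route via Lemma \ref{lemma 4.14.1} is workable but unnecessary here: the paper simply invokes the It\^o--McKean test together with Remark \ref{remark 4.17.1} (and Remark \ref{remark 4.18.1} for the symmetric domain), which is legitimate since the prohibition on Wiener-type criteria concerns only $L_\lambda$, not $\Delta$.
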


In comparison with Miller's examples from \cite{M1} and
\cite{M2}  of
$L  \not \to \Delta$, for any $d\geq 3$
(and not only for $d=3$ as in \cite{M1}) 
, we present a domain $ G$
such that the interior of $\bar G $ is $ G$.

  \mysection{Auxiliary results}
                            \label{section 3}
Take and fix   constants $b\in(-\infty,0],c\in(0,\infty)$.

 \begin{lemma}
                              \label{lemma 3.2}
        Let $\mu$ be a continuous function 
on $[b,c]$  and $\beta   $ and $\nu$ be  
   continuous  functions  on $[b, c]\setminus\{0\}$ such that $\nu(t)$ is bounded,
 $\nu(t)>1$ and $\beta(t)>0$ on $[b, c]\setminus\{0\}$, and
   \begin{equation}
                                                         \label{4.9.1}
\int_b^c \beta(t)  \, dt<\infty,
\quad\int_b^c \beta(t)  |t| ^{- \nu(t)} \, dt  =\infty.
\end{equation}
If $x \not\in[b,c]$ or $r>0$ set
$$
k_{1}(x ,r)=\int_{b}^{c}\frac{\mu(t)\beta(t)}{[(t-x )^{2}+r^{2}]
^{\nu(t)/2}}\,dt,
$$
$$
k_{2}(x  ,r)=\int_{b}^{c}\frac{ \beta(t)}{[(t-x )^{2}+r^{2}]
^{\nu(t)/2}}\,dt,
$$
      $$
           \omega (x , r) = \frac{ k_1(x, r)} {k_2 (x, r)} .
     $$
If $x \in[b,c]$  set
$$
           \omega (x ,0) = \mu(x ) .
     $$

      Then $\omega(x , r)$ is a continuous function
on $\bR\times\bR_{+}$.
    \end{lemma}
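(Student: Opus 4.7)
The approach is to split the task into continuity on the open set
$\Omega=\{(x,r)\in\bR\times\bR_{+}:x\notin[b,c]\text{ or }r>0\}$
and continuity at the singular boundary points $(x_{0},0)$ with
$x_{0}\in[b,c]$. On $\Omega$ each point has a closed neighborhood on
which $(t-x)^{2}+r^{2}$ is bounded below by a positive constant; since
$\nu$ is bounded, the integrands defining $k_{1}$ and $k_{2}$ admit a
dominating envelope of the form $C\beta(t)$, which is integrable.
Dominated convergence then gives continuity of $k_{1}$ and $k_{2}$ on
$\Omega$, and $\beta>0$ on $[b,c]\setminus\{0\}$ forces $k_{2}>0$
there, so $\omega=k_{1}/k_{2}$ is continuous on $\Omega$. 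Combined
with continuity of $\mu$ along $[b,c]\times\{0\}$, what is left is to
check that $\omega(x,r)\to\mu(x_{0})$ as $(x,r)\to(x_{0},0)$ through
$\Omega$ for each $x_{0}\in[b,c]$.

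Fix such an $x_{0}$ and $\varepsilon>0$. Using continuity of $\mu$,
pick $\delta>0$ with $|\mu(t)-\mu(x_{0})|<\varepsilon$ on
$U_{\delta}:=[b,c]\cap(x_{0}-\delta,x_{0}+\delta)$, and decompose
\[
k_{1}(x,r)-\mu(x_{0})k_{2}(x,r)
=\int_{U_{\delta}}(\mu(t)-\mu(x_{0}))\frac{\beta(t)\,dt}{[(t-x)^{2}+r^{2}]^{\nu(t)/2}}
+\int_{[b,c]\setminus U_{\delta}}(\mu(t)-\mu(x_{0}))\frac{\beta(t)\,dt}{[(t-x)^{2}+r^{2}]^{\nu(t)/2}}.
\]
The first piece has absolute value at most $\varepsilon\,k_{2}(x,r)$,
and the second at most $2\sup|\mu|\cdot I(x,r)$, where $I(x,r)$ is the
$k_{2}$-integrand restricted to $[b,c]\setminus U_{\delta}$. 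Dividing
by $k_{2}$, the problem reduces to showing
$I(x,r)/k_{2}(x,r)\to 0$ as $(x,r)\to(x_{0},0)$.

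The numerator $I$ stays bounded near $(x_{0},0)$: for
$|x-x_{0}|<\delta/2$ one has $|t-x|\geq\delta/2$ on the integration
range, so $[(t-x)^{2}+r^{2}]^{\nu(t)/2}$ is bounded below by a
positive constant depending on $\delta$ and $\sup\nu$, while
$\int_{b}^{c}\beta\,dt<\infty$. The heart of the argument is blowup of
the denominator: Fatou's lemma applied to the nonnegative integrand
gives
\[
\nliminf_{(x,r)\to(x_{0},0)}k_{2}(x,r)\geq\int_{b}^{c}\frac{\beta(t)}{|t-x_{0}|^{\nu(t)}}\,dt,
\]
and the right-hand side equals $+\infty$ in every case: for
$x_{0}=0$ this is exactly the second half of assumption~\eqref{4.9.1},
while for $x_{0}\neq 0$ the continuity and positivity of $\beta$ and
$\nu$ near $t=x_{0}$, together with $\nu(x_{0})>1$, produce a
non-integrable singularity (one-sided if $x_{0}\in\{b,c\}$, two-sided
otherwise). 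Combining the two bounds,
$\nlimsup_{(x,r)\to(x_{0},0)}|\omega(x,r)-\mu(x_{0})|\leq\varepsilon$,
and $\varepsilon\downarrow 0$ closes the argument. The main obstacle
is precisely this uniform blowup of $k_{2}$ across every
$x_{0}\in[b,c]$: assumption \eqref{4.9.1} is tailor-made for the
singular case $x_{0}=0$ where $\beta$ and $\nu$ may themselves
blow up, while the condition $\nu>1$ covers all the other points.
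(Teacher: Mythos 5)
Your proposal is correct and follows essentially the same route as the paper: decompose $k_1-\mu(x_0)k_2$ using the modulus of continuity of $\mu$, control the far-from-$x_0$ piece by a finite integral (using boundedness of $\nu$ and integrability of $\beta$), and invoke Fatou's lemma together with \eqref{4.9.1} and $\nu>1$, $\beta>0$ to force $k_2(x,r)\to\infty$ at every $x_0\in[b,c]$. You simply spell out a bit more explicitly the dominated-convergence argument on the open set and the local divergence for $x_0\ne 0$, both of which the paper treats as immediate.
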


     \begin{proof}
      Clearly, we only need to show the continuity 
on the segment $[b,c]$ of the $x $-axis. 
Observe that owing to Fatou's lemma, \eqref{4.9.1}, and the fact that
$\nu>1$ and $\beta>0$ on
$[b,c]\setminus\{0\}$, for any
$x_{0}\in[b,c]$ we have
\begin{equation}
                                                         \label{4.9.2}
\lim_{\substack{r\downarrow0,\\x \to x_{0}}}k_{2}(x ,r)=\infty.
\end{equation}
Furthermore, if $\delta(\varepsilon)$ is the modulus of continuity
of $\mu$, then from
$$
k_{1}(x ,r)=\mu(x_{0})k_{2}(x ,r)+
\int_{b}^{c}\frac{(\mu(t)-
\mu(x_{0}))\beta(t)}{[(t-x )^{2}+r^{2}]
^{\nu(t)/2}}\,dt
$$
and the estimates $|\mu(t)-
\mu(x_{0})|\leq
\delta(\varepsilon)$ if $|t-x_{0}|\leq\varepsilon$ and 
$|\mu(t)-\mu(x_{0})|
\leq 2\sup  | \mu | $ we see that
$$
|k_{1}(x ,r)-\mu(x_{0})k_{2}(x,r)|\leq
\delta(\varepsilon)k_{2}(x ,r)+2\sup 
 | \mu | \int_{b}^{c}
\frac{\beta(t)}{\varepsilon^{\nu(t)}}\,dt,
$$
where the last integral is finite for any $\varepsilon>0$
since $\nu$ is bounded. Now our assertion
follows in light of \eqref{4.9.2}. The lemma is proved.
     \end{proof}

\begin{remark}
                                           \label{remark 4.22.1}
Obviously, if $\mu\geq \nu$, where $\nu$ is a number,
then $\omega\geq\nu$. 

\end{remark}

\begin{lemma}
                                     \label{lemma 4.10.1}
Let $\mu $ and $h$ be nonnegative functions
on $[b,c]$ such  $\mu h$ is not identically equal to zero,
$\mu$ is bounded, and
\begin{equation}
                                                  \label{4.12.1}
\int_{b}^{c}  |t| ^{\mu(t)  }h(t)\,dt<\infty.
\end{equation}
For $x\in\bR$, $r\geq0$ introduce
\begin{equation}
                                                  \label{4.11.1}
u(x,r)=\int_{b}^{c}\frac{ |t| ^{\mu(t)}h(t)}
{[(t-x)^{2}+r^{2}]^{\mu(t)/2}}\,dt.
\end{equation}
Then the equation
\begin{equation}
                                                  \label{4.11.2}
u_{xx}(x,r) + u_{rr}(x,r)+\frac{\omega(x,r)}{r}u_{r}=0
\end{equation}
holds in $\bR\times\bR_{+}$ apart from  the interval $[b, c]$ on the 
$x $-axis,
where $\omega(x,r)$
is taken
  from Lemma \ref{lemma 3.2} with 
 $\beta(t)=\mu(t)t^{\mu(t)  }h(t)$, and $\nu(t)=\mu(t)+2$.
\end{lemma}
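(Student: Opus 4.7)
The approach is to differentiate $u$ under the integral sign and read off the correct coefficient. The starting observation is that, for each fixed $t\in[b,c]$, the kernel
$$
v_t(x,r):=\rho^{-\mu(t)},\qquad \rho=\rho(x,r,t):=[(t-x)^2+r^2]^{1/2},
$$
satisfies an explicit identity. Using $\rho_x=(x-t)/\rho$ and $\rho_r=r/\rho$, a direct computation gives
$$
v_{t,xx}+v_{t,rr}=\mu(t)^{2}\rho^{-\mu(t)-2},\qquad \frac{v_{t,r}}{r}=-\mu(t)\rho^{-\mu(t)-2}.
$$
Hence each individual $v_t$ solves its own equation with first-order coefficient $\mu(t)$; since this coefficient depends on $t$, when integrating against $|t|^{\mu(t)}h(t)\,dt$ we must choose a single $(x,r)$-dependent coefficient $\omega(x,r)$ equal to the appropriate weighted average of $\mu(t)$.

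Next I would justify differentiation under the integral at any point $(x_0,r_0)\in\bR\times\bR_+$ that does not lie on the segment $[b,c]$ of the $x$-axis. At such a point one has $\rho(x,r,t)\geq c_0>0$ uniformly in $t\in[b,c]$ and in a small neighborhood of $(x_0,r_0)$. Since $\mu$ is bounded, $\rho^{-\mu(t)-2}$ and its $(x,r)$-derivatives up to order two are uniformly bounded there, so the integrand of $u$ and its first two derivatives in $(x,r)$ are dominated by a constant multiple of $|t|^{\mu(t)}h(t)$, which is integrable by \eqref{4.12.1}. Dominated convergence then yields
$$
u_{xx}+u_{rr}=\int_{b}^{c}\mu(t)^{2}|t|^{\mu(t)}h(t)\rho^{-\mu(t)-2}\,dt,\qquad \frac{u_r}{r}=-\int_{b}^{c}\mu(t)|t|^{\mu(t)}h(t)\rho^{-\mu(t)-2}\,dt.
$$

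Finally, with $\beta(t)=\mu(t)|t|^{\mu(t)}h(t)$ and $\nu(t)=\mu(t)+2$, the quantities $k_1,k_2$ from Lemma~\ref{lemma 3.2} become exactly
$$
k_1(x,r)=\int_{b}^{c}\mu(t)^{2}|t|^{\mu(t)}h(t)\rho^{-\mu(t)-2}\,dt,\qquad k_2(x,r)=\int_{b}^{c}\mu(t)|t|^{\mu(t)}h(t)\rho^{-\mu(t)-2}\,dt,
$$
so $u_{xx}+u_{rr}=k_1$ and $u_r/r=-k_2$. Since $\mu h\not\equiv 0$ with $\mu,h\geq 0$ and $\rho>0$ off $[b,c]\times\{0\}$, the integrand of $k_2$ is nonnegative and strictly positive on a set of positive measure, so $k_2>0$ and $\omega=k_1/k_2$ is well defined; equation \eqref{4.11.2} then reads $k_1-\omega k_2=0$, which holds by construction. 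The main (minor) technical step is the dominated-convergence argument for differentiating under the integral; once the uniform lower bound on $\rho$ away from $[b,c]\times\{0\}$ is in place, the rest is arithmetic. Note that the integrability and divergence conditions \eqref{4.9.1} of Lemma~\ref{lemma 3.2} play no role here—they are needed only to extend $\omega$ continuously down to $[b,c]\times\{0\}$, which is outside the region where \eqref{4.11.2} is asserted.
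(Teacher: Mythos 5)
Your proposal is correct and follows essentially the same route as the paper: compute the $(x,r)$-derivatives of the kernel $\rho^{-\mu(t)}$ to read off the pointwise relation $v_{t,xx}+v_{t,rr}=-\mu(t)\,v_{t,r}/r$, then integrate against $|t|^{\mu(t)}h(t)\,dt$ and identify the resulting integrals with $k_1$ and $k_2$ from Lemma~\ref{lemma 3.2}. Your version is marginally more explicit in that it spells out the dominated-convergence justification for differentiating under the integral, verifies $k_2>0$, and notes that the hypotheses \eqref{4.9.1} are only needed for continuity up to the segment; the paper leaves these as routine.
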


  \begin{proof}   Observe that, for any constant
$\mu>0$, the function
$$
 \upsilon(x,r) =\frac{1}{(x  ^{2}+r^{2})^{\mu/2}}
$$
satisfies
$$
 \upsilon _{x }=-\mu\frac{x }{(x  ^{2}+r^{2})^{1+\mu/2}},
\quad
 \upsilon _{r}=-\mu\frac{r}{(x  ^{2}+r^{2})^{1+\mu/2}},
$$
$$
 \upsilon _{x x }= -\mu\frac{1}{(x  ^{2}+r^{2})^{1+\mu/2}}
+\mu(2+\mu)\frac{x^{2}}{(x  ^{2}+r^{2})^{2+\mu/2}},
$$
$$
 \upsilon _{rr}= -\mu\frac{1}{(x  ^{2}+r^{2})^{1+\mu/2}}
+\mu(2+\mu)\frac{r^{2} }{(x  ^{2}+r^{2})^{2+\mu/2}},
$$
$$
 \upsilon _{x x }+  \upsilon _{rr}=
-\frac{\mu}{r} \upsilon _{r}.
$$

It follows that
$$
u_{ x x }+u_{ rr}=
\int_{b}^{c}
\frac{\mu ^{2}(t) |t| ^{\mu (t)}h(t)} 
{[(t-x ) ^{2}+r^{2}]^{1+\mu (t)/2}}\,dt,
$$
and, since
$$
u_{ r }=-r\int_{b}^{c}
\frac{\mu  (t) |t| ^{\mu (t)}h(t)} {[(t-x ) ^{2}+r^{2}]^{1+\mu (t)/2}}\,dt,
$$
we have
$$
u_{ x x }+u_{ rr}=-\frac{\omega (x ,r)}{r}u_{ r },
$$
and the lemma is proved. 
  \end{proof}

\begin{remark}
                      \label{remark 4.23.1}
  Assume the notation and conditions of  Lemma \ref{lemma 3.2} and Lemma \ref{lemma 4.10.1}.
 Assume that   $c < 1$, $ b=-c$ and $\mu$ and $h$  are even functions. 
 Also assume that $h(t)$ is  decreasing,
  $\mu(t)$  and $t^{\mu(t)} h(t)$ are increasing functions on the interval $(0, c]$.
  Let 
   $$
           \psi (t,x,r)=\frac{ \beta(t) } { [(t-x)^2 + r^2]^{\nu(t)/2}}. 
  $$

  Recall that $\omega(x,0)=\mu(x)$ on $[b,c]$.
  This implies that  if  a function $\mu$ is not Dini continuous at $0$,
 then so is $\omega(x,r)$ at $(0,0)$.
 We claim that under the same assumption on  $\mu(t)$ the function
 $\omega(x,r)$ is even not Dini continuous in a domain 
  $$
   D = \{(x,r): r^2 + x^2 < c^2,  r > r(|x|)\},
  $$
  where $r(t)$ is any continuous increasing function on 
$[0,c]$  such that   $0<r(t)   < t$
 for $t>0$ and $r(0) = 0$. Moreover, the Dini continuity
fails again at the point $(0,0)$.
 
    To prove the claim observe that 
$$
k_2(x,r)=\int_{-c}^{c} \psi(t,x,r) \, dt
$$
 is an even function in $x$ hence
 we will only consider  the case $x > 0$.
   We have
    \begin{equation}
                                      \label{4.23.1.2}
        |k_1 (x,r) - \mu(0)k_2 (x,r)| \geq 
   (\mu(x/2) - \mu(0)) \left(\int_{-c}^{-x/2} + \int_{x/2}^{c}\right) 
 \psi(t) \, dt.
   \end{equation}
   
   Take any $(x,r) \in D$ and note that $[(t-x)^2 +
r^2]^{\nu(t)}$ is a decreasing function in $t$ on
  the interval $(0, x]$.
  This follows from the fact that  
   $(t-x)^2 + r^2$ is a decreasing function of the argument
$t$ on $[0,x]$ and
  $(t-x)^2 + r^2 < 1$, $\forall t \in [0, x]$,
  and
 the function $\nu$
 is increasing on $(0, c]$.

 This combined with the fact that $\beta$ is  increasing 
 on the interval $(0,c]$ imply that $\psi(t,x,r)$ is an
increasing function  in $t$ on $(0,x]$.
 Therefore, 
   \begin{equation}
                                \label{4.23.1.02}
     \int_{x/2}^{x} \psi(t,x,r) \, dt \geq  \int_{0}^{x/2} \psi(t,x,r) \, dt.
 \end{equation}
  
 Next,  using the substitution 
$t = -s$ and our assumption 
that $\beta$ and $\nu$ are even functions
 we conclude that 
  $$
   \int_{0}^{x/2} \psi(t,x,r) \, dt 
= \int_{-x/2 }^{0}
\frac{\beta(s)}{ [ (s+x)^2 + r^2]^{\nu(s)/2} } \, ds
\geq \int_{-x/2}^{0} \psi(t,x,r) \, dt.
  $$
  By this and \eqref{4.23.1.02} we get that
   $$
    2 \int_{x/2}^{x} \psi(t,x,r) \geq   \int_{-x/2}^{x/2} \psi(t,x,r) \, dt.
   $$
  The latter implies  that
  $$
  3 \left(\int_{-c}^{-x/2} + \int_{x/2}^{c}\right) 
 \psi(t,x,r) \, dt \geq  k_2 (x,r).
  $$
 
   Therefore, the right hand side of \eqref{4.23.1.2} is greater than
     $
       (\mu(x/2) - \mu(0)) k_2(x,r)/3
     $
   and, thus, 
  \begin{equation}
                              \label{4.23.1.3}
   |\omega(x,r) - \omega(0,0)| \geq (\mu(x/2) - \mu(0))/3.
  \end{equation}
 
  If $\omega(x,r)$ were
 Dini continuous in $D$, then 
  there would exist a continuous increasing function 
$\phi(t), t \in [0,c]$,
 such that $\phi(0) = 0$, 
  $$
  \sup_{(x,r) \in D} 
|\omega(x,r) - \omega(0,0)| \leq \phi(\sqrt{x^2 + r^2}), \quad 
  \int_0^c \phi(t)/t \, dt < \infty.
  $$
  Then using the fact that $D$ contains a part of the line $r = x$ and  \eqref{4.23.1.3} we obtain
   $$
     \int_0^c \phi(\sqrt{2}t)/t \, dt  \geq  \int_0^c |\omega(x,x) - \omega(0,0)| \, dx/x \geq 
  \frac{1}{3}\int_0^c \frac{\mu(x/2) - \mu(0)}{x} \, dx =\infty.
  $$
Thus, such $\phi$ does not exist and
the claim is proved.
 
 \end{remark}
 
  \begin{lemma}
                                     \label{lemma 4.11.1}
Let $\mu$ and $h$ be nonnegative functions on $[b,c]$,
let $\mu $   be  nondecreasing  
on $(0,c]$, and 
let   $h$ be    nonincreasing   
on $(0,c]$   such that  $\mu(x)>1$ on $(0,c]$ and
condition \eqref{4.12.1} is satisfied.
Let $r(x)$ be a function on $(0,c]$
such that $(1/2)x\geq r(x)>0$ for small $x$ and
\begin{equation}
                                               \label{5.21.1}
   \nlimsup_{x\downarrow 0} \,(\mu(x)-1)
\ln(r(x)/x)<0,          
\end{equation}
\begin{equation}
                                                      \label{4.11.3}
\lim_{x\downarrow0} r(x)(x/r(x))^{\mu(x)}h(2x)
\frac{1}{\mu(x)-1}=\infty.
\end{equation}
Take $u$ from \eqref{4.11.1}. Then
\begin{equation}
                                                      \label{4.11.5}
\lim_{x\downarrow0}u(x,r(x))=\infty.
\end{equation}
\end{lemma}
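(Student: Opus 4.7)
The plan is to bound $u(x, r(x))$ from below by restricting the defining integral to the subinterval $t \in [x+r, 2x]$ (nonempty since $r \leq x/2$) and reducing to an explicit one-variable integral reminiscent of the Lebesgue--Urysohn computation in Remark \ref{remark 2.4}.

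On this subinterval three monotonicity facts do all the work. Since $r \leq t-x \leq x$, the estimate $(t-x)^2 + r^2 \leq 2(t-x)^2$ yields $[(t-x)^2+r^2]^{\mu(t)/2} \leq 2^{\mu(t)/2}(t-x)^{\mu(t)}$. Because $t \geq x$ gives $t/(t-x) \geq x/(t-x) \geq 1$, and $\mu$ is nondecreasing so $\mu(t) \geq \mu(x)$, one obtains
\[
\left(\frac{t}{t-x}\right)^{\mu(t)} \geq \left(\frac{x}{t-x}\right)^{\mu(t)} \geq \left(\frac{x}{t-x}\right)^{\mu(x)}.
\]
Since $h$ is nonincreasing on $(0,c]$, $h(t) \geq h(2x)$, and since $\mu$ is nondecreasing on $(0,c]$ it is bounded there by $\mu(c) < \infty$, so $2^{\mu(t)/2} \leq 2^{\mu(c)/2}$. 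Multiplying these estimates, the integrand is bounded below on $[x+r,2x]$ by $c_0\, h(2x)\, x^{\mu(x)} (t-x)^{-\mu(x)}$ with $c_0 = 2^{-\mu(c)/2}$.

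Integrating in $t$ and evaluating the elementary integral $\int_r^x s^{-\mu(x)}\,ds = (r^{1-\mu(x)} - x^{1-\mu(x)})/(\mu(x)-1)$ (valid since $\mu(x) > 1$), hypothesis \eqref{5.21.1} supplies a $\delta > 0$ with $(r/x)^{\mu(x)-1} \leq e^{-\delta}$ for all small $x$, which keeps $1 - (r/x)^{\mu(x)-1}$ bounded away from zero. Using $x^{\mu(x)} r^{1-\mu(x)} = r\,(x/r)^{\mu(x)}$ one concludes
\[
u(x, r(x)) \;\geq\; c_1 \cdot \frac{r(x)\,(x/r(x))^{\mu(x)}\, h(2x)}{\mu(x) - 1}
\]
for some $c_1 > 0$ and all sufficiently small $x > 0$, and the right-hand side tends to $\infty$ by \eqref{4.11.3}, proving \eqref{4.11.5}.

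The only technical wrinkle is handling the $t$-dependence of the exponent $\mu(t)$ that sits in both numerator and denominator of the integrand; the choice of the interval $[x+r, 2x]$ is exactly what makes $x/(t-x) \geq 1$, so raising this quantity to the \emph{larger} exponent $\mu(t)$ only increases it and the variable $\mu(t)$ can be replaced by the constant $\mu(x)$ throughout. Condition \eqref{5.21.1} enters solely to ensure that $r^{1-\mu(x)}$ dominates the subtracted $x^{1-\mu(x)}$ by a definite factor, after which \eqref{4.11.3} finishes the argument.
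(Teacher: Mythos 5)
Your proof is correct and follows essentially the same route as the paper: restrict the integral to $t\in[x+r,2x]$, bound $h(t)$ by $h(2x)$, use the monotonicity of $\mu$ to replace $\mu(t)$ by $\mu(x)$, evaluate the resulting elementary integral, and invoke \eqref{5.21.1} to keep $1-(r/x)^{\mu(x)-1}$ bounded away from zero before applying \eqref{4.11.3}. The only difference is cosmetic: the paper substitutes $t=x+sr$ and needs an extra constant $\delta=2^{-\mu(c)/2}$ because its base can dip below $1$, whereas you absorb the factor $2^{\mu(t)/2}$ first so that the base $x/(t-x)\geq 1$ makes the exponent swap immediate.
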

\begin{proof} For $0\leq r\leq x$ we have
$$
u(x,r )\geq\int_{x+r}^{2x}\frac{t^{\mu(t)}h(t)}
{[(t-x)^{2}+r^{2}]^{\mu(t)/2}}\,dt
\geq h(2x)\int_{x+r}^{2x}\frac{t^{\mu(t)} }
{[(t-x)^{2}+r^{2}]^{\mu(t)/2}}\,dt,
$$
where in the second inequality we used the fact that
$h$ is a decreasing function.
We make the substitution $t=x+sr$ and observe that
 for $s \geq 0$ 
$$ 
(x/r+s )^{\mu(t)}\geq (x/r)^{\mu(t)}
$$ and, for $ 1\leq s\leq x/r$, (recall that
$r\leq x$)
$$
\frac{x/r}
{(s^{2}+1)^{1/2}}\geq1/\sqrt{2},
\quad \bigg[\frac{x/r}
{(s^{2}+1)^{1/2}}\bigg]^{\mu(t)}\geq \delta
 \bigg[\frac{x/r}
{(s^{2}+1)^{1/2}}\bigg]^{\mu(x)},
$$
where $\delta=2^{-\mu(c)/2}$, and the last inequality holds
since $\mu(t)$ is an increasing function.
 In addition note that, for $s\geq 1$,
 we have 
$$ 
  s^{2}+1\leq 2s^{2} ,\quad(s^{2}+1)^{\mu(x)/2}\leq 
2^{\mu(x)/2}s^{\mu(x )}\leq\delta^{-1}s^{\mu(x )}.
$$

Then for $0\leq r\leq x$ we obtain
$$
u(x,r )\geq rh(2x) \int_{1}^{x/r}\bigg[\frac{x/r}
{(s^{2}+1)^{1/2}}\bigg]^{\mu(t(s))}
\,ds
$$
$$
\geq\delta r(x/r)^{\mu(x)}h(2x)\int_{1}^{x/r}
\frac{1}{(s^{2}+1)^{\mu(x)/2}}\,ds
$$
$$
\geq\delta^2 r(x/r)^{\mu(x)}h(2x)\int_{1}^{x/r}
\frac{1}{s^{\mu(x) }}\,ds
$$    
$$
=\delta^{2} r(x/r)^{\mu(x)}h(2x)\frac{1}{\mu(x)-1}
[1-(r/x)^{\mu(x)-1}].
$$
Now our assertion follows easily.
\end{proof}
 
To prove Theorem \ref{theorem 2.2} we need the following.
 
 \begin{lemma}
                                    \label{lemma 7.1}
  Let  $b = - c$,   $\gamma \in (1, \infty)$,
     $\mu(t) \equiv \mu \in [1, d-2)$,
   $$
 h(t) = |t|^{- 1} |\ln |t||^{-\gamma}
$$ 
  and take  $u(x, r)$  from \eqref{4.11.1}.
 For $x$ small enough  split $u(x, r)$ 
into two parts $u_1 (x, r)$ and $u_2 (x, r)$,
  where the first one is the integral from $-2|x|$ to $2|x|$.   

Then

 (i)  For small enough $x\ne0$ and 
$r\ne0$ we have 
$$
 u_{1}(x, r) \leq N |\ln |x||^{
-\gamma}(|x|/r)^{ \mu-1}
\quad\text{if}\quad \mu>1,
$$ 
$$
 u_{1}(x, r) \leq N  |\ln |x||^{
-\gamma} \ln(|x|/r)
  \quad\text{if}\quad \mu=1\quad\text{and}
\quad 2r\leq|x|,
$$
where $N$ is independent of $x$ and $r$.
  Also $u_{2} (x, r)\to u
 (0, 0) < \infty$ as $(x, r) \to 0$.
  
 (ii)   If $r(x) = r(|x|)=o(|x|)$ as  $x \to 0$  and
 $r(x) $ is differentiable  and $|r'(x)|\leq 1$,
then
    \begin{equation}
                               \label{7.1}
    \lim_{x \to 0} x|\ln x|^{\gamma}
 [u_{2}( x, r(x) ) ]' =: \rho \in (-\infty, 0).
    \end{equation}
\end{lemma}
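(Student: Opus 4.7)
\emph{Part (i).} The bounds on $u_1$ follow by pulling the logarithmic factor outside the integral. On $|t|\leq 2|x|$ with $|x|$ small, $|\ln|t||\geq|\ln(2|x|)|$ is comparable to $|\ln|x||$, so $|\ln|t||^{-\gamma}\leq N|\ln|x||^{-\gamma}$, and $|t|^{\mu-1}\leq(2|x|)^{\mu-1}$. The substitution $t=x+rs$ converts the remaining integral $\int_{-2|x|}^{2|x|}[(t-x)^2+r^2]^{-\mu/2}\,dt$ into $r^{1-\mu}\int_{-A}^{A}(s^2+1)^{-\mu/2}\,ds$ with $A$ of order $|x|/r$; for $\mu>1$ the $s$-integral is uniformly bounded, while for $\mu=1$ with $2r\leq|x|$ it is of order $\ln(|x|/r)$, giving the two asserted bounds. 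For the convergence of $u_2$, the bound $|t-x|\geq|t|/2$ on $|t|\geq 2|x|$ makes the integrand in \eqref{4.11.1} dominated by $2^\mu |t|^{-1}|\ln|t||^{-\gamma}$, which is integrable on $[-c,c]$ since $\gamma>1$, so the dominated convergence theorem yields $u_2(x,r)\to u(0,0)<\infty$.

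\emph{Part (ii), reduction.} Set $F(t)=t^{\mu-1}|\ln t|^{-\gamma}$ and fold the integral over $t<0$ onto $t>0$ to write, for $x>0$,
$$
u_2(x,r(x))=\int_{2x}^{c}F(t)\Phi(t,x)\,dt,\quad\Phi(t,x):=\frac{1}{[(t-x)^2+r(x)^2]^{\mu/2}}+\frac{1}{[(t+x)^2+r(x)^2]^{\mu/2}}.
$$
This expression is even in $x$, so it suffices to compute the right-limit of $x|\ln x|^\gamma[u_2(x,r(x))]'$ at $0$. Leibniz's rule gives
$$
[u_2(x,r(x))]'=-2F(2x)\Phi(2x,x)+\int_{2x}^{c}F(t)\partial_x\Phi(t,x)\,dt.
$$
Since $r(x)=o(x)$, the boundary term satisfies $F(2x)\Phi(2x,x)\sim 2^{\mu-1}(1+3^{-\mu})/(x|\ln x|^\gamma)$, contributing $-2^\mu(1+3^{-\mu})$ to $\rho$. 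Writing $\partial_x\Phi=\mu[A-B]-\mu r(x)r'(x)[C+D]$, where $A,B$ are the signed derivatives of the two denominators and $C,D$ are their positive even counterparts, the bound $\int_{2x}^{c}F(C+D)\,dt=O(x^{-2}|\ln x|^{-\gamma})$ combined with $|r(x)r'(x)|\leq r(x)=o(x)$ shows that the $rr'$-contribution vanishes after multiplication by $x|\ln x|^\gamma$.

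\emph{Part (ii), main integral and evaluation.} The remaining piece, rescaled by $t=xs$ with $\epsilon:=r(x)/x\to 0$, becomes
$$
x|\ln x|^\gamma\mu\int_{2x}^{c}F(A-B)\,dt=\mu\int_{2}^{c/x}s^{\mu-1}\frac{|\ln x|^\gamma}{|\ln(xs)|^\gamma}\left[\frac{s-1}{[(s-1)^2+\epsilon^2]^{\mu/2+1}}-\frac{s+1}{[(s+1)^2+\epsilon^2]^{\mu/2+1}}\right]ds.
$$
The principal obstacle is that the log ratio blows up as $s\to c/x$, so naive dominated convergence fails; I overcome this by splitting the range at $s=x^{-1/2}$. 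On $[2,x^{-1/2}]$ the ratio is bounded by $2^\gamma$ and the bracket is pointwise dominated by $(s-1)^{-\mu-1}$, so dominated convergence yields the limit $\mu K$, where $K:=\int_{2}^{\infty}s^{\mu-1}[(s-1)^{-\mu-1}-(s+1)^{-\mu-1}]\,ds$. On $[x^{-1/2},c/x]$ the bracket is $O(s^{-\mu-2})$ and the integrand is $\leq C|\ln x|^\gamma s^{-3}$, whose integral over this range is $O(x|\ln x|^\gamma)\to 0$. Finally, using the antiderivatives $\mu s^{\mu-1}(s-1)^{-\mu-1}=-\frac{d}{ds}(s/(s-1))^\mu$ and $\mu s^{\mu-1}(s+1)^{-\mu-1}=\frac{d}{ds}(s/(s+1))^\mu$, one computes $\mu K=(2^\mu-1)-(1-(2/3)^\mu)=2^\mu(1+3^{-\mu})-2$, and therefore
$$
\rho=-2^\mu(1+3^{-\mu})+\mu K=-2\in(-\infty,0),
$$
as required.
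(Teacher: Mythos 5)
Your proof is correct and follows essentially the same route as the paper: the same bound for $u_1$ via the substitution $t=x+rs$ and dominated convergence for $u_2$ in (i), and in (ii) the same Leibniz decomposition into boundary terms contributing $-2^{\mu}(1+3^{-\mu})$ plus the rescaled integral $t=xs$, the differences being only bookkeeping (you fold the negative half onto $t>0$, dispose of the $rr'$ term separately, and split the $s$-range at $x^{-1/2}$ where the paper instead dominates the log ratio by $N+N|\ln s|^{\gamma}$). Your extra evaluation $\rho=-2$ is a correct refinement consistent with the paper, which only records that the limit is finite and strictly negative.
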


  \begin{proof}
   Since $u(x,r)$ is an even function in $x$ may 
concentrate on $x > 0$.

  (i) First, let $\mu>1$.
  By the substitution 
$ t=x+sr$ and the facts
  that 
$x^{ \mu - 1} |\ln x|^{-\gamma}$ is an increasing function
for small $x>0$ and $(s^2 + 1)^{-\mu/2  }$ is an integrable function we obtain
  $$
        u_1 (x, r) \leq 
       (2x)^{ \mu - 1} |\ln (2x)|^{-\gamma} 
      \int_{-2x}^{2x} \frac{dt} 
    { [ ( t - x)^2 + r^2 ]^{ \mu/2 } } 
   $$
      $$
 \leq   N  x^{ \mu - 1} |\ln x|^{-\gamma}\, r^{ 1  - \mu} 
       \int_{-\infty}^{\infty} \frac{ ds }
{ [s^2 + 1]^{ \mu/2 } } . 
      $$

 If $\mu=1$, then
$$
u_{1}(x,r)\leq 
        |\ln (2x)|^{-\gamma} 
      \int_{-2x}^{2x} \frac{dt} 
    { [ ( t - x)^2 + r^2 ]^{1/2 } }
$$
$$
\leq N |\ln x|^{-\gamma}\int_{-x/r}^{x/r}\frac{ds} 
{ [s^2 + 1]^{ 1/2 } }
\leq N |\ln x|^{-\gamma}
\bigg(2+\int_{1\leq|s|\leq x/r}
\frac{ds} 
{ |s|  }\bigg)
$$
$$
\leq N|\ln x|^{-\gamma}\ln(x/r).
$$
 
  The dominated convergence theorem 
implies the second claim
 in (i) since, for $|t|\geq 2x$ we have
$|t-x|\geq(1/2)|t|$ and the integrand in the expression
of $u_{2}(x,r)$, written
 as  the integral over $[-c,c]$,
is dominated by $2^{\mu}h(t)$, which is integrable.
 
(ii)  First, observe that,
as is easy to see, we may assume that $c<1$. Next, 
 we have
$$
[ u_{2}(x ,r(x))]'= -
   \frac{2^{ \mu  }\, x^{ \mu - 1} } 
{ [x^{2}+ r^{2} (2x)]^{  \mu/2   }
  |\ln(2 x)|^{\gamma}} 
$$
\begin{equation}
                                       \label{7.2}  
 - \frac{2^{ \mu  }\, x^{ \mu - 1} } 
{ [9x^{2}+ r^{2} (2x)]^{ \mu/2  }
  |\ln(2x)|^{\gamma}} 
+ g(x),
\end{equation}
where 
\begin{equation}
                             \label{7.3}
  g(x) =  \mu\left( \int_{2x}^{c} +  \int_{-c}^{-2x}\right) \frac{ |t|^{ \mu-1} }{ |\ln |t||^{\gamma} }
\frac{ (t - x)- r(x) r' (x)}{
[ (t - x)^{2}+ r^{2} (x)]^{  1+  \mu/2   } }\,dt.
\end{equation}
Obviously, the sum of the first two
 terms on the right in \eqref{7.2}
multiplied by $x |\ln x|^{\gamma}$ 
tends to~$- 2^{\mu} ( 1 + 1/3^{\mu})$.

 Next, we multiply the first term 
  on the right-hand side of \eqref{7.3} by $x |\ln x|^{\gamma}$
and make the
 change of variables $t = xs$
 so that the resulting expression equals
$$
   \mu \int_{2}^{\infty}
\frac{ |\ln x|^{\gamma} }{ |\ln s +\ln x|^{\gamma} }
\frac{ s^{ \mu-1 }[ s -1 - r(x) r'(x)/x] }
{ [ (s - 1)^{2} + (r(x)/x)^2 ]^{  \mu/2  + 1 } }
  I_{ s \leq c/x }\,ds.
$$
On the set $s\in( 2, c/x )$ we have
$\ln s+\ln x\leq\ln c$, $|\ln s+\ln x|\geq
|\ln c|$ ($c<1$), and
$$
\frac{ |\ln x|^{\gamma} }{ |\ln s +\ln x|^{\gamma} }=
\frac{ |\ln s +\ln x -
 \ln s |^{\gamma} } { |\ln s +\ln x|^{\gamma} }
\leq N + N \frac{|\ln s   |^{\gamma}}{|\ln s +\ln x|^{\gamma}}
\leq N + N |\ln s|^{\gamma}.
$$
  One can make a similar argument about the second term on \eqref{7.3}.
Hence by the dominated convergence theorem 
$$
\lim_{x \downarrow 0} x|\ln x|^{\gamma} g(x)
= \mu \int_{2}^{\infty}   \frac{ s^{ \mu - 1 } }{ ( s - 1 )^{ \mu + 1 } } \,ds  
 - \mu \int_{-\infty}^{-2}  \frac{ |s|^{ \mu - 1 } }{ |s - 1|^{ \mu + 1 } } \,ds
  $$
  Observe that
    $$
          \mu \int_{2}^{\infty}   \frac{ s^{ \mu - 1 } }{ ( s -
1 )^{ \mu + 1 } } \,ds  =-
\int_{2}^{\infty}   s^{ \mu - 1}   \,d ( s -
1 )^{ -\mu   }
$$
$$
=   
   2^{ \mu - 1 } +
  ( \mu - 1 ) \int_{2}^{\infty} 
   (1  - s^{-1} )^{-  \mu } s^{-2} \, ds  
   $$
     $$
      =  2^{ \mu - 1 } +  2^{ \mu - 1 } - 1
=2^{\mu}-1.
     $$   
  This combined with the above result concerning the first two terms 
on the right in \eqref{7.2} proves \eqref{7.1}.
The lemma is proved.  
 \end{proof}

\mysection{Proof of Theorems \protect\ref{theorem 2.1} and
\protect\ref{theorem 2.3}}  

{\bf Proof of Theorem \ref{theorem 2.1}}.
{\em Case $d=3$\/}. Take $b=-1$, $c=1$, introduce
$\mu \equiv 1 /\varepsilon$,
$h \equiv 1$, and take
$u(x,r)$ from \eqref{4.11.1}, so that
\begin{equation}
                                                     \label{4.18.4}
u(x,r)=\int_{-1}^{1}\frac{|t|^{1/\varepsilon}}{[(t-x)^{2}+r^{2}]^{1/(2\varepsilon)}}\,dt,
\end{equation}
which is quite similar to \eqref{4.14.1}.

 Then we have \eqref{4.11.2} outside
the interval $[-1,1]$ of the $x$-axis with
$$
\omega(x,r)=\frac{1}{\varepsilon }=\lambda  ,
$$
so that according to \eqref{1.2}
the function $v(x)=u(x_{1},|x'|)$ satisfies $L_{\lambda}v=0$
in $\bR^{3}$ outside
the interval $[-1,1]$ of the $x_{1}$-axis.

Observe that, if  $r(x)=x^{\eta}$, 
for an $\eta>0$, then
the limit in \eqref{4.11.3} equals
$$
\frac{\varepsilon}{1-\varepsilon}\lim_{x\downarrow0}
x^{\eta(1-1/\varepsilon)+1/\varepsilon},
$$
which is infinite if $\eta>1/(1-\varepsilon)$.
 Also in that case \eqref{5.21.1}
is satisfied  
  so
that
 by Lemma \ref{lemma 4.11.1} 
$$
\lim_{ \substack{|x'|=r(x_{1})\\{x_{1}\downarrow
0}}}v(x)=\infty.
$$ and owing to the symmetry of $v$ with respect to $x_{1}$
\begin{equation}
                                                         \label{4.12.7}
\lim_{ \substack{|x'|=r(x_{1})\\{x_{1}\to 0}}}v(x)=\infty.
\end{equation}
  
   However, by the monotone convergence theorem
   \begin{equation}
                          \label{*}
        \lim_{ |x'| \downarrow 0} v (0, x' ) = 2.
  \end{equation}

By Lemma \ref{lemma 4.14.1} with  
   $
       w (x) = 1/|x|, 
 $
for which $L_{\lambda}w\leq 0$, since $\lambda>1$,
 the origin is irregular relative to   
  $L_{\lambda}, G'$,
  where $G'$ is defined by \eqref{4.17.2}.

On the other hand,  by  the criterion due to 
K.   It\^o  and H. McKean 
(see  Section  7.11 in \cite{IM}; in their examples, however, there is an error, see the last line on page 259), 
the origin is a   regular relative to $\Delta$  boundary point  for the surface 
$|x'|=r(x_{1})$, $x_{1}\in(0,1]$,
  because
   $$
     \int_0 \frac{  dx } {x  |\ln [r(x )/ x ]|  } 
          = (\eta - 1)^{-1} \int_0 \frac{dx }{x  |\ln x |} = \infty.
       $$
  Now we get the assertion of the theorem
by the first part of Remark \ref{remark 4.17.1}.

{\em Case $d\geq4$\/}.
Introduce
$$
\mu(t)=(d-2)/\varepsilon , \quad 
h(2t)=|t|^{-1}|\ln |t||^{-\alpha},
$$ 
where $\alpha\in(1,\infty)$, and take $u(x,r)$
from \eqref{4.11.1} with $b=-c$ and $c>0$ so small that $h$ is a decreasing
function on $(0,c]$. Then we have \eqref{4.11.2} outside
the interval $[-c,c]$ of the $x$-axis with
$$
\omega(x,r) = \frac{d-2}{\varepsilon }=\lambda (d-2),
$$
so that, according to \eqref{1.2},
the function $v(x)=u(x_{1},|x'|)$ satisfies $L_{\lambda}v=0$
in $\bR^{d}$ outside
the interval $[-c,c]$ of the $x_{1}$-axis.

For 
$$
      r(x ) = x  |\ln x |^{-\eta} ,
   $$
where $\eta=1/(d-3)$,
the limit in \eqref{4.11.3} equals
$$
 \frac{\varepsilon}{d-2-\varepsilon}
\lim_{x\downarrow0} |\ln x|^{ \eta( (d-2)/\varepsilon-1)-\alpha},
$$
which is infinite if
\begin{equation}
                                                     \label{3.11.6}
(d-3)\alpha< (d-2)/\varepsilon-1 .
\end{equation}
 
The set of $\alpha>1$
satisfying \eqref{3.11.6} is nonempty since $\varepsilon \in (0, 1)$.
 We pick any such $\alpha$.
Also observe that \eqref{5.21.1}
is satisfied.
In that case  \eqref{4.12.7}   holds.

  On the other hand, by the monotone convergence theorem
   $$
        \lim_{    |x'| \downarrow 0 } v (0, x' ) = v(0, 0) = u(0, 0) < \infty.
   $$
  Then by lemma $\ref{lemma 4.14.1}$ with 
 $
    w(x) =  |x|^{ - (d-2) } 
   $
  the origin
  is irregular relative to $L_{ \lambda }, G'$, 
  where $G'$ is defined by \eqref{4.17.2}. 

  At the same time,  by  the criterion 
due to K.  It\^o  and H.~McKean 
(see  Section 7.11 in \cite{IM}) 
the origin is a   regular relative to $\Delta$  boundary point  for the surface 
$|x'|=r(x_{1})$, $x_{1}\in[0,1/2]$,
  because
$$
        \int_0  
   \left(  
 \frac{r(x )}{x }
    \right)^{d-3}   \frac{dx }{x } =-
 \int_0 |\ln x  |^{-1} d|\ln x| = \infty.
   $$
 We finish the proof of the theorem as in the
case of $d=3$. \qed

 \begin{remark}
                        \label{remark 4.1}
  In \cite{L1} E.M. Landis constructed a similar example for $d = 3$ by means
  of the potential
  $$
           \int_{a}^{b} \frac{1}{ [ (t - x_1)^2 + r^2 ]^{ \nu/2 } }\, dt,  
   $$
  where $ 0 \leq a \leq b $, also used in \cite{U}.
    In \cite{L1}, \cite{Na}, and \cite{U} the  solutions are given by   weighted 
  sums of those potentials.
 \end{remark} 
 
{\bf Proof of Theorem  \ref{theorem 2.3}}. {\em Case $d=3$\/}.
    Introduce 
$$
 \mu(t)=1+  (\ln | \ln |t| |)^{-1}, 
\quad h \equiv 1, \quad r(t) =|t|^{1+ |\ln|\ln |t||}.
$$ Notice that for sufficiently small $c>0$ and $b=-c$ the
conditions
of Lemma \ref{lemma 4.10.1} are satisfied and
the conditions of Lemma \ref{lemma 3.2}
are satisfied  with 
\begin{equation}
                                            \label{5.21.2}
  \beta(t)=\mu(t) |t| ^{\mu(t)  }h(t),\quad \nu(t)=\mu(t)+2.
\end{equation}
It follows that $u$ introduced by \eqref{4.11.1}
satisfies equation  \eqref{4.11.2} 
in $\bR\times\bR_{+}$ apart from  the interval $[-c, c]$ on the 
$x $-axis, where
 $\omega$ is
defined in Lemma \ref{lemma 3.2} with the above
 $\mu,\beta,\nu$, 
and this $\omega$ is a continuous function.

Then $v(x)=u(x_{1},|x'|)$ satisfies $L_{\lambda}v=0$
in $\bR^{3}$ apart from  the interval $[ -c , c]$ on the 
$x_{1} $-axis, where $\lambda(x)=\omega(x_{1},|x'|)$
is a continuous function and $\lambda(0)=\omega(0,0)
=\mu(0)
=1$, so that by Remark \ref{remark 4.12.1}
the coefficients of $L_{\lambda}$ are continuous in
the closure of $G ' $,   where  $G'$ is defined by
 \eqref{4.17.2}. 
  In that case, of
course, the coefficients of $L_{\lambda}$  are smooth in
$\bar G'\setminus\{0\}$.
 
Then observe that the expression under the limit
sign  in \eqref{4.11.3} equals $ \ln|\ln x|$
and, thus, \eqref{4.11.3} holds true. Also 
\begin{equation}
                                               \label{4.12.5}
  (\mu(x)-1)
\ln(r(x)/x)= \ln x\to-\infty
\end{equation}
as $x\downarrow0$. Therefore \eqref{5.21.1}
is satisfied as well.
 
Hence by Lemma \ref{lemma 4.11.1} equations \eqref{4.11.5}
and  \eqref{4.12.7}
hold again. Furthermore, since $v(x)$ is even with respect to 
$x_{1}$, we again have \eqref{4.12.7}.
 
 On the other hand, by the monotone convergence theorem
$$
\lim_{ |x'| \downarrow 0 }v(0,x')=
\lim_{r\downarrow0}u(0,r)=2c<\infty.
$$
  By Remark \ref{remark 4.22.1} we have $\omega\geq1$, so
that
$\lambda\geq1$ and $L_{\lambda}w\leq0$ for $w=1/|x|$.   
Therefore,   by  Lemma \ref{lemma 4.14.1}, with $w$ defined
as above,  the origin is not regular relative to
$L_{\lambda}$ , $G'$.

The fact that the origin is regular for $\Delta$,
$G{ '}$ again follows from  Section
7.11 of  \cite{IM}  
 (and Remark \ref{remark 4.17.1}) since 
$$
  \int_0 \frac{  dx } {x  |\ln [r(x )/ x ]|  } 
=\int_{0}\frac{d\ln x}{|\ln x|(
\ln|\ln x|)}=\infty.
$$
This  
  proves the theorem in case $d=3$.

{\em Case $d\geq 4$\/}. This time   take 
a number $\gamma>1$ and define
       \begin{equation}
                               \label{4.12.8}
          \mu(t) = d - 2 + \gamma (d-3)
\frac{\ln\ln|\ln|t||}{\ln|\ln|t||},  
        \end{equation}
     
$$
h(2t) =  |t|^{-1} |\ln |t|
|^{-1}(\ln|\ln |t||)^{-2}  , 
$$

 $$
   r(x) =  |x|\, |\ln |x|
| ^{- 1/(d-3) }(\ln|\ln |x||)^{- 1/(d-3) }.
    $$
Observe that one can choose $c>0$ so small that for $b=-c$ the assertions
made in case $d=3$  before \eqref{4.12.5} are still valid with
thus defined $\mu$, $h$, $r$, and 
$\lambda(x)=(d-2)^{-1}\omega(x_{1},|x'|)$ in place of those $\mu$, $h$, $r$, and $\lambda$ introduced
in case $d=3$.
 Also note that \eqref{5.21.1} holds
because this time $\mu(t)$ does not even go to 1
as $t\downarrow0$.

  Let
    $$
     \phi(t) = \gamma(d-3)\frac{\ln\ln|\ln t|}{\ln|\ln t|}, \quad  g(x)  = \ln|\ln x|.
  $$
    Then observe that, for $x>0$ we have
     \begin{equation}
                                 \label{4.12.9}
        r(x)(x/r(x))^{\mu(x)}h(2x) = |\ln x|^{\phi(x)/(d-3)} (\ln|\ln x|)^{-1 + \phi/(d-3)}   
    \end{equation}
    $$
  = [g(x)]^{\gamma - 1 + 
\gamma (\ln g(x))/g(x)} \to \infty
   $$
  as $x \downarrow 0$ which yields \eqref{4.11.3},
\eqref{4.11.5}, and \eqref{4.12.7}.
 
On the other hand,
by the monotone convergence theorem
$$
\lim_{ |x'|\downarrow0 }v(0,x')=
\lim_{r\downarrow0}u(0,r)=u(0,0)=\int_{-c}^{c}h(t)\,dt<\infty.
$$
This  
 and Lemma \ref{lemma 4.14.1} with $w=1/|x|^{d-2}$,
which satisfies $L_{\lambda}w\leq0$
since $\lambda\geq1$,
shows that the origin is not regular relative to $L_{\lambda} , G'$.

We finish the proof by observing that
$$
        \int_0  
   \left(  
 \frac{r(x )}{x }
    \right)^{d-3}   \frac{dx }{x }=\int_{0}\frac{d\ln x}{|\ln x|
\ln|\ln x|}=\infty,
$$
so that the fact that the origin is regular 
relative to $\Delta,G'$ again follows
from \cite{IM} and  Remark \ref{remark 4.17.1}.
The theorem is proved.   \qed
 
\begin{remark}
                         \label{remark 5.1}
 Observe that in Theorem \ref{theorem 2.3} $\mu$ is not Dini
continuous at $0$ and $r(t) \in (0, t)$,
 $\forall t > 0$, in
both cases. Hence, it follows by  Remark \ref{remark 4.23.1}
(see \eqref{4.23.1.3}) that the modulus of continuity 
$\phi$ of the coefficients of the corresponding differential
operator does not
satisfy the uniform Dini condition in the domain
 $G{ '}$
 from the proof of Theorem \ref{theorem 2.3}. 
However, $\phi$ misses the Dini condition by
quite much. It  even misses, albeit  barely, the condition
\begin{equation}
                                          \label{5.23.5}
\int_{0}\frac{\phi(r)}{r|\ln r|}\,dr<\infty.
\end{equation}

The proof of Theorem \ref{theorem 2.3} shows that if 
 \eqref{5.23.5} (which is weaker than the Dini condition)
is violated, then it may happen that 
  $\Delta \not \to L$.
We could not construct an  example of an operator
$L$ such that \eqref{5.23.5} holds but   $\Delta \not \to L$.
 In this connection a natural question
arises:

Is it true that, if condition
\eqref{5.23.5} is satisfied, then
all points regular relative to $\Delta$
are also regular relative to $L$?
  \end{remark}
 
  \mysection{Proof of Theorem \protect\ref{theorem 2.2}.}
  \textbf{Proof of  Theorem \ref{theorem 2.2}}   
{\em Case $ d \geq 3$ and $\varepsilon<1/(d-2)$\/}. 
Set $\lambda \equiv
\varepsilon$ and observe that  \eqref{1.2} and easy computations
show that 
  the function $v(x) = |x'|^{1-\varepsilon(d-2)}$  satisfies
$L_{\lambda} v=0$ outside of 
 the $ x_{1} $-axis. 

 This $v$ is a barrier
relative to $L_{\lambda}$ at the origin for any domain,
 whose closure intersects
the $ x_{1} $-axis only at the origin.
 It follows that the origin is regular relative to
$L_{\lambda}$ and $G'$  defined by  \eqref{4.17.2}, where
  $r(t)$ is any continuous function on $[0, c)$ for some $c
> 0$ and, moreover,  $r(t) > 0$  for  $t > 0$ and $r(0) =
0$.      We take $r(x_1)
= e^{-\varepsilon/x_1}$ and, by what was said in Remark
\ref{remark 5.23.1},
 the origin is irregular relative to 
$\Delta$ and $G'$. 

{\em Case   $ d \geq 4$ and
$\varepsilon\in(1/(d-2),1)$\/}. 
 Take $u(x, r)$ 
from Lemma \ref{lemma 7.1}
  with $\mu = \varepsilon ( d - 2)$. 
   Then we have \eqref{4.11.2} outside
the interval $[-c,c]$ of the $x_1$-axis with
$
\omega(x,r)=\varepsilon (d-2)=\lambda(d-2),
$
so that according to \eqref{1.2}
the function 
$$
v(x)=u(x_{1},|x'|)
$$
 satisfies $L_{\lambda}v=0$
in $\bR^{d}$ outside
the interval $[-c, c]$ of the $x_{1}$-axis.

 Let $\eta \in (0, 1)$ 
be such that 
$$
\eta(\mu-1)<1,\quad \eta(d-3)>1.
$$
Notice that the set of such $\eta$ is nonempty
because $1/(\mu-1)>1/(d-3)$ since
$d-3>\varepsilon(d-2)-1$. Set
   \begin{equation}
                                \label{5.23.4}
       r(t) = |t|\,|\ln |t|\,|^{ -\eta },
  \end{equation}
  and let $G'$ be  the  domain defined by \eqref{4.17.2}.

   We claim that $v(0) - v(x)$ is a  
 barrier relative to
$L_{\lambda}$, $G'$
(see Section \ref{section 4.14.1} for the definition of 
a barrier).    In order to prove the claim,   
take $u_1(x,r)$ and  $u_2(x,r) $ 
from Lemma \ref{lemma 7.1} and observe that
 \begin{equation} 
                               \label{5.23.2}
    v(0) -  v (x) = u (0, 0) - u_2 (x_1, |x'|) -u_1 (x_1, |x'|).
  \end{equation}
 By   Lemma \ref{lemma 7.1}  (i)  in   $G'$ we have
  $$
          u_1 (x_1, |x'|)\leq
u_1 (x_1, r(x_{1})) \leq N  (|x_1|/r(x_{1}))^{\mu-1} |\ln|x_1||^{-\gamma}
$$
$$
= N|\ln |x_1||^{  \eta(\mu-1)-\gamma} \to 0
  $$
  as $x_1 \to 0$ (recall that $\gamma
\in(1,\infty)$). 
   Also by Lemma
\ref{lemma 7.1} (i)
$u_2 (x_1, |x'|)\to u(0,0)$ as $x\to0$.  
It follows  that $v(x) \in C(\bar G')$
and that to prove the claim it suffices to show
  the following: 
   \begin{equation}
                                \label{5.23.3}
         \nliminf_{x_1 \downarrow 0} \frac{ u(0,0) -   u_2
(x_1, r(x_{1}) ) } { u_1 (x_1, r(x_{1})) } > 1.
    \end{equation}
  
   By  Lemma \ref{lemma 7.1}  (i)  it suffices to prove
\eqref{5.23.3} with $N |\ln |x_1||^{  \eta(\mu-1)-\gamma}$
in place of 
$u_1 (x_1, r(x_{1}))$. After this we use the L'Hospital's
rule   to observe that
$$
\nliminf_{x_1 \downarrow 0} \frac{ u(0,0) -   u_2 (x_1,
r(x_{1}) ) } { |\ln  x_1 |^{ 
\eta(\mu-1)-\gamma} }
=\frac{-1}{\gamma-\eta(\mu-1)}\lim_{x_{1}\downarrow0}
\frac{x_{ 1}|\ln x_{ 1}|^{\gamma}[ u_2
(x_1, r(x_1))]'} {|\ln  x_1 |^{ 
\eta(\mu-1) -1}},
$$ where the last limit does exist and equals $\infty$ in
light of
 Lemma 
\ref{lemma 7.1} (ii).

  Thus, with our claim being proved,
by what is said
before Lemma \ref{lemma 4.14.1}, the origin is a 
regular boundary point relative to $L_{\lambda}, G'$. 
 
   By the criterion from \cite{IM}
(see Section 7.11)  and Remark \ref{remark 4.17.1}
 the origin is an irregular boundary point relative to $\Delta,
G'$ because
   $$
        \int_0   \left( \frac{ r(t)}{t} \right)^{d-3}
\frac{dt}{t}
    = \int_0  \frac{dt} 
 { t |\ln t|^{\eta(d-3)  } } < \infty 
    $$
since by our choice of $\eta$ we have $\eta(d-3)>1$.

{\em Case $d\geq 4$ and $\varepsilon
=1/(d-2)$\/}.
 Take $u(x, r)$ 
from Lemma \ref{lemma 7.1}
  with $\mu =\varepsilon(d-2)=1$ and $\gamma \in (1, 2)$. 
   We have \eqref{4.11.2} outside 
the interval $[-c,c]$ of the $x_1$-axis with
$
\omega(x,r)= 1,
$
so that according to \eqref{1.2}
the function 
$
v(x)=u(x_{1},|x'|)
$
 satisfies $L_{\lambda}v=0$
in $\bR^{d}$ outside
the interval $[-c, c]$ of the $x_{1}$-axis.

  Take any $\eta > 0$ and let a function $r(t)$ and a domain $G'$ be defined by
    \eqref{5.23.4} and \eqref{4.17.2},
  respectively. Take $u_1(x_1, r)$ and $u_2(x_1, r)$
  from Lemma \ref{lemma 7.1} and observe that \eqref{5.23.2} holds. 
 
    By Lemma \ref{lemma 7.1} (i) in $G'$ we have
  $$
   u_1(x_1, |x'|) \leq 
u_1(x_1, r(x_1)) \leq N |\ln|x_1||^{-\gamma} \ln |\ln|x_1||^{\eta}
  $$ 
    \begin{equation} 
                                \label{5.24.1}
         \leq N |\ln|x_1||^{-\gamma/2} \to 0
    \end{equation}
  as $x_1 \to 0$.
  We also have $u_2(x_1, |x'|) \to u(0,0)$ as $x \to 0$ by Lemma \ref{lemma 7.1} (i).
 This and \eqref{5.24.1} imply that $v \in C(\bar G')$. In order to prove that
 $v(0) - v(x)$ is a barrier at the origin relative to $L_{\lambda}, G'$  it suffices to show that \eqref{5.23.3}
 holds. As in the previous case thanks to
 \eqref{5.24.1}
we
  may replace the denominator with  $N |\ln|x_1||^{-\gamma/2}$. 
 Next, by L'Hospital's rule we have
   $$
       \nliminf_{x_1 \downarrow 0} \frac{ u(0,0) -   u_2 (x_1,
r(x_{1}) ) } { |\ln  x_1 |^{-\gamma/2} } = 
  \frac{ - 1} {  \gamma/2} \lim_{x_{1}\downarrow0}
  x_1 |\ln  x_1 |^{\gamma/2 + 1}  [u_2(x_1, r(x_1))]'
   $$
  where the last expression equals $\infty$ due to Lemma \ref{lemma 7.1} (ii) and our choice of $\gamma \in (1, 2)$. 
 This proves \eqref{5.23.3} and hence $v(0) - v(x)$ is a barrier at the origin relative to 
 $L_{\lambda}, G'$. Thus,  the origin is regular relative to $L_{\lambda}, G'$.

 Now we take $\eta > 1/(d-3)$. We know from the previous case that for such $\eta$
  the origin is irregular relative to $\Delta, G'$. This finishes the proof.
  \qed

   \mysection{Comments on some underlying ideas}
                                            \label{section 4.19.1}

Let $a(x)$ be a $d\times d$ symmetric matrix valued Borel measurable function
on $\bR^{d}$ which is bounded and uniformly nondegenerate.
Let $\Omega$ be the set of continuous $\bR^{d}$-valued functions 
$\omega=\omega(t)$
on $[0,\infty)$. For $\omega\in\Omega$ introduce $x_{t}(\omega)=\omega_{t}$
and let $N_{t}$ be the $\sigma$-field of subsets of $\Omega$
generated by the sets $\{\omega:x_{s}(\omega)\in\Gamma\}$ for $s$ running through $[0,t]$
and $\Gamma$ running through the set of Borel subsets of $\bR^{d}$.
By $N_{\infty}$ we denote the $\sigma$-field of subsets of $\Omega$
generated by the sets $\{\omega:x_{s}(\omega)\in\Gamma\}$ for $s<\infty$.
As is known from Theorem 3 of \cite{Kr_73}, for any $x\in\bR^{d}$
there exists a probability measure $P_{x}$ on $\{\Omega,N_{\infty}\}$
such that $X=(x_{t},\infty,N_{t },P_{x})$ 
 is a strong Markov process
and $X=(x_{t},\infty,N_{t+},P_{x})$ is a   Markov process (in the terminology of \cite{Dy_63}) such that for any twice continuously differentiable
function $u(x)$ on $\bR^{d}$ with compact support and any $x\in\bR^{d}$
and $t\geq0$ we have
$$
u(x)=E_{x}u(x_{t})-E_{x}\int_{0}^{t}Lu(x_{t})\,dt.
$$
This property and the Markov property imply that
$$
u(x_{t})-\int_{0}^{t}Lu(x_{t})\,dt
$$
is a martingale relative to $(N_{t},P_{x})$ 
for any $x$ and  this  combined with
the strong Markov property easily shows that, for any bounded
domain $G\subset\bR^{d}$, $u\in C^{2}(\bar G)$, and $x\in G$, we have
\begin{equation}
                                             \label{4.18.1}
u(x)=E_{x}u(x_{\tau_{G}})-E_{x}\int_{0}^{\tau_{G}}Lu(x_{t})\,dt,
\end{equation}
where
$$
\tau_{G}=\inf\{t\geq 0:x_{t}\not\in G\}.
$$
In light of this, naturally, for any bounded domain $G$, Borel bounded $g$
on $\partial G$ and $f$ on $G$ the function
$$
 E_{x}g(x_{\tau_{G}})-E_{x}\int_{0}^{\tau_{G}}f(x_{t})\,dt
$$
is called a probabilistic solution of the equation $Lu=-f$
in $G$ with the Dirichlet boundary condition $g$.
We are interested in the case where $f=0$ and the main issue
for us is whether for a $p\in\partial G$ and any
continuous $g$   it holds that
$$
\lim_{\substack{x\in G\\x\to p}}E_{x}g(x_{\tau_{G}})=g(p).
$$
If it holds indeed, $p$ is called a regular point. 

If the coefficients of $L$ 
are in $C^{\delta}( \bar G )$, $G\in C^{2+\delta}$,
$f\in C^{\delta}(\bar G)$,
and $g\in C^{2+\delta}(\partial G)$, the equation $Lu=-f$ in $G$
with boundary condition $g$ has a unique solution 
$u\in C^{2+\delta}(\bar G)$, which owing to what was said about
\eqref{4.18.1} implies that the probabilistic solution coincides
with $u$. In particular,
$$
\cR(G)f(x)=E_{x}\int_{0}^{\tau_{G}}f(x_{t})\,dt.
$$
This and the fact that $\tau_{G_{n}}\to\tau_{G}$ as $n\to\infty$
if the domains $G_{n}\uparrow G$ implies that the notions
of regular points introduced here and in Section \ref{section 4.14.1}
agree if the coefficients of $L$ are in $C^{\delta}_{\loc}(G)$.

In a subsequent paper we will show that $X$ is a strong Feller
process  (see  Section 13.1 in \cite{Dy_63} for the definition) 
and therefore, owing to \cite{Kr_66}, $p$ is a regular point
if and only if
$$
P_{p}\{\tau'_{G}=0\}=1,
$$
where
$$
\tau'_{G}=\inf\{t>0: x_{t}\not\in G\}.
$$
One knows from Blumenthal's 0-1 law that $P_{p}\{\tau'_{G}=0\}$
is either zero or one. 
\begin{remark}
                                        \label{remark 4.18.1}
Take $G$ and $G'$ from \eqref{4.17.1}
and \eqref{4.17.2}, respectively.
Then the origin is regular with respect to $\Delta,G$
if and only if it is regular relative to $\Delta,G'$.

Indeed, in one way this follows from the fact that $G'\subset
G$ (see Section \ref{section 4.14.1}). In the opposite direction, if the origin is not regular
relative to $\Delta,G$, then $\tau_{G}>0$ ($P_{0}$-a.s.),
owing to symmetry, $\tau_{-G}>0$ ($P_{0}$-a.s.), and hence
$\tau_{G'}=\tau_{G}\wedge\tau_{-G}>0$ ($P_{0}$-a.s.),
so that the origin is irregular relative to $\Delta,G'$.

\end{remark}

Now we can explain the idea behind our examples. 
Denote by $\Delta_{d}$ the Laplacian in $\bR^{d}$.
It\^o and McKean in \cite{IM} characterized $r(x)$ for which
the origin is $\Delta_{d},G_{d}$-regular with $G_{d}=G$
defined in \eqref{4.17.1}.
One sees that for $d=3$ and $d=4$ the classes of $r(x)$ yielding
regularity of the origin for $\Delta_{d},G_{d}$ are quite different. Since
everything is invariant with respect to rotations about the $x_{1}$-axis,
it was natural to figure out  what is the difference between the Laplacians
in $d=3$ and $d=4$ in the coordinates $x_{1},r$ ($r=|x'|$). It turns out that in $d=4$
one has the extra term $(1/r)D_{r}u$, so that the $x'$-component
of the corresponding Markov process is pushed away from the origin
harder than in the case of the  three-dimensional Laplacian.
One   gets such an additional term for $d=3$ as well if one
replaces the (two-dimensional) Laplacian with respect to $x'$
with an operator built from the matrix $a(x')$ with one
 eigenvalue  1
corresponding to the  
 eigenvector  $x'/|x'|$ and the other
 eigenvalue  equal to 2
and corresponding  to the  
 eigenvector  orthogonal to $x'/|x'|$. Then, we
recall that for $\Delta_{4}$ the function $1/|x|^{2}$ is
harmonic and from  the start instead of the 3d Laplacian we
take $L_{\lambda}$ with $\lambda=2$ in $\bR^{3}$. By the
above argument we   conclude that the function
$$ 
u(x_{1},|x'|
)=\int_{0}^{1}\frac{t^{2}}{(t-x_{1})^{2}+|x'|^{2}}\,dt,\quad
x\in\bR^{3},
$$ 
satisfies $L_{\lambda}u=0$ if $x_{1}\not\in[0,1]
$ or $x'\ne0$.  It follows that for any
function
$r(x_{1})$, $x_{1}\in[0,2]$, such that $r(0)=r(2)=0$
and $r(x_{1})>0$ for $x_{1}\in(0,2)$, we have
 $$
u(x_{1},|x'| )=v(x_{1}, |x'|)
$$
in
$$
D_{3}=\bR^{3}\setminus\{x:x_{1}\in[0,2],|x'|\leq r(x_{1})\},
$$
where $v=v(x_{1},\sqrt{x_{2}^{2}+x_{3}^{2}+x_{4}^{2}})$
is the solution of $\Delta_{4}v=0$ in
$$
D_{4}=\bR^{4}\setminus\{x:x_{1}\in[0,2],|x'|\leq r(x_{1})\}
$$
with the boundary data $v(x_{1}, |x'|)=u(x_{1},|x'| )$
on $\partial D_{4}$. By   Section 7.11 in  \cite{IM}, if
$$
 \int_0     
\frac{dx}{x[x/r(x)]}<\infty 
\quad\text{and}\quad
 \int_0 \frac{  dx } {x  |\ln [x/r(x) ]|  } =\infty,
$$
then the origin is regular relative to
$\Delta_{3}, D_{3}$ and is not regular relative
to $\Delta_{4}, D_{4}$, in which case
the limit
$$
\lim_{x_{1}\downarrow0}v(x_{1},r(x_{1}))
=\lim_{x_{1}\downarrow0}u(x_{1},r(x_{1}))
$$
if it exists, most likely is different from $v(0)
=u(0)=1$, and, if this is true, we have a definite proof that
the origin is not regular relative 
to $L_{\lambda},D_{ 3}$    
from the point of view of Markov processes. Observe that
the coefficients of $L_{\lambda}$ are discontinuous 
inside $D_{3}$  and there
is no PDE theory or regular points for operators 
whose coefficients
are discontinuous inside domains.

More generally, we know from Section \ref{section 2} how to build
  an operator in $x'$ variables for $d=3$  whose
radial part is $u_{rr}+(\lambda/r)u_{r}$. It corresponds  
to the Laplacian in, so to speak, $\lambda+1$-dimensional space. When we add
the second order derivative with respect to $x_{1}$ and obtain
 $L_{\lambda}$, we are dealing
with the Laplacian in  $\lambda+2 $-dimensional space, where $1/|x|^{\lambda}$ is a harmonic function. This leads to the guess that
the  direct analogue of
\eqref{4.14.1}:
$$
v_{ \lambda}(x):=\int_{0}^{1}\frac{t^{\lambda}}
{[(t-x_{ 1})^{2}+|x'|^{2}]^{\lambda/2}}\,dt,
\quad x\in\bR^{3},
$$ is an $L_{\lambda}$-harmonic function and if $\lambda>1$, so
that $\lambda+2>3$, then we expect the same conclusions to hold
as in the case of $\lambda=2$.  These were our starting ideas.

One more relevant comment  is that, since the operators $L_{\lambda}$ have
discontinuous coefficients for which there is no PDE theory
of regular points, we avoided using the above, somewhat incomplete, probabilistic arguments in order to attract readers not familiar
with the theory of Markov processes. This led to considering
$G'$ in place of $G$ and introducing  $u$ by
\eqref{4.18.4}
rather  than  using the above $v_{ \lambda}$.


\begin{thebibliography}{m}

\bibitem{A} Yu.A. Alkhutov {\em  On the regularity 
of boundary points with respect to the Dirichlet problem for
second-order elliptic equations\/},
  Mat. Zametki, Vol. 30:3 (1981), 333-342, in Russian;
English translation in  Math. Notes, Vol. 30
(1982), No.   3, 655--660. 


 
\bibitem{B} P. Bauman, {\em A Wiener test for 
nondivergence structure, second-order elliptic equations\/},
 Indiana Univ. Math J., Vol. 34 (1985), No. 4, 825--844.

\bibitem{Dy_63} E.B.
Dynkin, Markov processes,  Gosudarstv. Izdat. Fiz.-Mat. Lit., Moscow 1963,
in  Russian; English translation Vols. I, II.
  Die Grundlehren der Mathematischen
Wissenschaften,  B\"ande 121, 122 Academic Press Inc.,
Publishers, New York; Springer-Verlag, 
Berlin-G\"ottingen-Heidelberg 1965.


 \bibitem{LH} L. Helms,   Potential theory, 
   Universitext, Springer, London, 2014.


 \bibitem{H} R.M. Herv\'e, {\em Recherche axiomatique sur la th\'eorie des fonctions surharmoniques et du
  potentiel\/}, Ann. Inst. Fourier (Grenoble), Vol. 12 (1962), 
415--571. 

\bibitem{IM} K. It\^o and H.P. McKean, 
 Diffusion processes and their sample paths,
  Springer-Verlag, Berlin, 1965.

\bibitem{KL} V.A. Kondrat'ev and E.M. Landis,
{\em Qualitative theory of second-order linear partial
differential equations\/}, 
 Partial differential equations - 3 (1988),  99--215,  Itogi
Nauki i Tekhniki. Ser. Sovrem. Probl. Mat. Fund. Napr., 32,
 VINITI, Moscow, in Russian;
 English translation in  Partial differential equations.
III. The Cauchy problem. Qualitative theory of partial
differential equations, Encyclopaedia of Mathematical
Sciences, 32, Springer-Verlag, Berlin, 1991, 87-192.
 


\bibitem{Kr_66} N.V. Krylov,   {\em On the regular boundary
points for Markov processes}, Teoriya Veroyatnostei i eye
Primeneniya, Vol. 11 (1966), No. 4,  690--695 in Russian;
English translation in Theor. Probability  Appl., 
    Vol. 11 (1966), No. 4,  609--614.

 \bibitem{K2} N.V. Krylov, {\em On the first boundary value
problem for elliptic equations of second 
order\/},
   Differentsial'nye Uravneniya, Vol. 3 (1967), 315-326, in
Russian;  English translation in
Differential Equations,  Vol. 3 (1967),  158--164.

\bibitem{Kr_73}  N.V. Krylov,
  {\em  On the selection of a Markov process from a system
of processes and the construction of quasi-diffusion
processes\/},  Izvestiya Akademii Nauk SSSR, seriya
matematicheskaya, Vol. 37 (1973), No. 3, 691--708 in
Russian; English translation in Math. USSR Izvestija,
  Vol. 7 (1973),  No. 3,   691--709.


\bibitem{K3} N.V.  Krylov, Controlled diffusion processes,
Nauka, Moscow,  1977, in Russian; English translation in
   Stochastic Modelling and Applied Probability, 
 Vol. 14, Springer-Verlag, Berlin, 1980 and 2009.

\bibitem{Kr_96} N.V.  Krylov,   Lectures on  elliptic and parabolic
equations in H\"older spaces, Amer.
Math. Soc., Providence, RI, 1996; Russian translation,
``Nauchnaya kniga'', Novosibirsk, 1998. 
 

\bibitem {L1} E.M. Landis, {\em s -capacity and its
applications to the study of solutions of a second-order
elliptic equation with discontinuous coefficients\/},
  Mat. Sb., Vol. 76:118 (1968), 186-213, in Russian; English
translation in Math. USSR-Sbornik, Vol. 5 (1968) 1968, No.
2, 177--204.
 
  \bibitem{L2} E.M. Landis, 
 Second order equations of elliptic and parabolic type, Nauka,
Moscow, 1971, in Russian; English translation:
  Amer. Math. Soc., Providence, RI, 1997.

 \bibitem{L} H. Lebesgue, {\em Sur des cas d'impossibilit\'e du probl\`eme de Dirichlet ordinaire\/}, 
C.R. S\'eances Soc. Math. France, Vol. 41 (1913), p. 17, reproduced in
  Oeuvres scientifiques, L'Enseignement Math\'ematique,
Institut de Math\'ematiques, Universit\'e de Gen\`eve, Vol. 3, 1972-1973.

\bibitem{M1} K. Miller, {\em Exceptional boundary points for
the nondivergence equation which are regular for the Laplace
equation and vice-versa\/},
  Ann. Scuola Norm. Sup. Pisa, Vol. 22: 2 (1968), 315-330.


\bibitem{M2} K. Miller, {\em Nonequivalence of regular boundary points for the Laplace and nondivergence equations, even with 
continuous coefficients\/},
  Ann. Scuola Norm. Sup. Pisa, Vol. 24: 1 (1970), 159--163.    

\bibitem{Na} A.A. Novruzov, {\em
The Dirichlet problem for second-order elliptic equations\/},  Dokl. Akad. Nauk SSSR, Vol. 246 (1979), No. 1, 11--14 in Russian;
English translation in
Soviet Math. Dokl., Vol. 20 (1979), No. 3, 437--440.
 

 \bibitem{O} O.A. Oleinik, {\em On the Dirichlet Problem for
equations of elliptic type\/}, 
  Mat. Sb., Vol. 24 (66) (1949), 3-14, in Russian.  


 \bibitem{U} P.S. Urysohn, {\em  Zur ersten Randwertaufgabe der Potentialtheorie. Ein Fall der Unl\"osbarkeit\/},
  Math. Z., Vol. 23: 1-2 (1925), 155-158, in German; Russian translation
pp. 101-106 in Trudy po topologii i drugim oblastyam matematiki,
tom 1, Gos. Izdat. Tekhniko-Teoretich. Liter., Moskva-Leningrad, 1954.

  \bibitem{W} N. Wiener,  {\em The Dirichlet Problem\/}, J. Math. Phys., Vol. 3 (1924), 127-146.

 \end{thebibliography}
     \end{document}